\newcommand{\intbar}{\mathop{\int\makebox(-13.5,0){\rule[4pt]{.7em}{0.3pt}}%
\kern-6pt}\nolimits}
\newcommand{\be}{\begin{equation}}
\newcommand{\ee}{\end{equation}}
\newenvironment{pf}{\noindent{\sc Proof}.\enspace}{\rule{2mm}{2mm}\medskip}
\newtheorem{remark}{Remark}[section]
\newcommand{\C}{\mathbb{C}}
\newcommand{\R}{\mathbb{R}}
\newcommand{\Rn}{\mathbb{R}^n}
\newcommand{\Rdue}{\mathbb{R}^2}
\newcommand{\N}{\mathbb{N}}
\renewcommand{\P}{{\cal P}}
\DeclareMathOperator{\spt}{spt}
\DeclareMathOperator{\Scal}{Scal}
\DeclareMathOperator{\Ric}{Ric}
\begin{document}

\author{Andrea Mondino$^{1}$, Stefano Nardulli$^{2}$}

\date{}

\title{Existence of isoperimetric regions in non-compact Riemannian manifolds under Ricci or scalar curvature conditions}

\newtheorem{lem}{Lemma}[section]
\newtheorem{pro}[lem]{Proposition}
\newtheorem{thm}[lem]{Theorem}
\newtheorem{rem}[lem]{Remark}
\newtheorem{cor}[lem]{Corollary}
\newtheorem{df}[lem]{Definition}
\newtheorem{ex}[lem]{Example}
\newtheorem*{Theorem}{Theorem}
\newtheorem*{Lemma}{Lemma}
\newtheorem*{Proposition}{Proposition}
\newtheorem*{claim}{Claim}

\maketitle

\footnotetext[1]{Scuola Normale Superiore, Piazza dei Cavalieri 7, 56126 Pisa, Italy, E-mail address: andrea.mondino@sns.it}
\footnotetext[2]{Departamento de Matem\'atica - Instituto de Matem\'atica - UFRJ Av. Athos da Silveira Ramos 149, Centro de Tecnologia - Bloco C Cidade Universit‡ria - Ilha do Fund‹o. Caixa Postal 68530 21941-909 Rio de Janeiro - RJ - Brasil, E-mail address: nardulli@im.ufrj.br}
\

\

\begin{center}
\noindent {\sc abstract}. We prove existence of isoperimetric regions for every volume in non-compact Riemannian $n$-manifolds $(M,g)$, $n\geq 2$, having Ricci curvature $\Ric_g\geq (n-1) k_0 g$ and being locally asymptotic to the simply connected space form of constant sectional curvature $k_0$; moreover in case $k_0=0$ we show that the isoperimetric regions are indecomposable. We also discuss some physically and geometrically relevant examples. Finally, under assumptions on the scalar curvature we prove existence of isoperimetric regions of small volume.  
\bigskip\bigskip

\noindent{\it Key Words:} 
Isoperimetry,  Ricci curvature, ALE gravitational instantons, asymptotically hyperbolic Einstein manifolds.

\bigskip

\centerline{\bf AMS subject classification: }
49Q10, 53A10, 53C42, 83C99.
\end{center}

\section{Introduction}\label{s:in}
If $(M,g)$ is a \emph{compact} Riemannian $n$-manifold, then standard techniques of geometric measure theory ensure existence of isoperimetric regions (roughly speaking $\Omega\subset M$ is an isoperimetric region if its boundary has least area among the boundaries of regions having the same volume of $\Omega$; for the precise notions see Section \ref{Sec:NP}).

In case $M$ is \emph{non-compact} the question of existence of isoperimetric regions is completely non-trivial and the few known  existence results are quite specific. A simple example where existence fails is the right hyperbolic paraboloid $M_\lambda$ defined by the equation $z=\lambda x y$: here there is no isoperimetric region for any value of the area (see  \cite{Rit1}). More dramatically, it can happen that isoperimetric regions exist just for \emph{some} value of the area (see \cite{CaneteRit} where a complete study of isoperimetry  in the case of quadrics of revolution is performed). Nevertheless there are some cases when the existence of isoperimetric regions for every volume is known: 
\begin{enumerate}
\item $(M,g)$ is complete non-compact but its \emph{isometry group acts co-compactly}  (see \cite{MorgJoh}, \cite{Mor94}, or \cite{GR10} in the context of sub-Riemannian contact manifolds).
\item $(M,g)$ is connected complete non-compact but with \emph{finite volume}  (this is an easy consequence of Theorem 2.1 in \cite{RitRosal}). 
\item The non-compact non simply connected surfaces constructed in \cite{GP}.
\item In several cases when $(M,g)$ is a \emph{cone}, the isoperimetric regions exist for every volume and are characterized (see \cite{MorRit}, \cite{RitRosal}); for \emph{warped products} see \cite{BrMo}. 
\item If $(M,g)$ is a a complete plane with non-negative curvature  (see \cite{Rit}).
\end{enumerate}
The reason for the non-existence of isoperimetric regions for a fixed volume $v>0$ is explained clearly by Theorem 2.1 in \cite{RitRosal} (recalled in Theorem \ref{thm:RR}): the lack of compactness in the variational problem is due to the fact that the minimizing sequences might split into a part  converging nicely to an isoperimetric region, and in another part  of positive volume  going to infinity. The diverging part of the minimizing sequences was studied  by the second author in \cite{NardAJM} using the theory of $C^{m,\alpha}$-pointed convergence of manifolds developed by Petersen \cite{Pet} (see Section \ref{Sec:NP}). In the present paper we adopt this second point of view. 

The main goal of the present work is to add, to the previous list, a class of manifolds admitting isoperimetric regions for all volumes. This is the content of the next theorem.  
\begin{thm}\label{thm:ExMinMain}
Let $(M^n,g)$ be an $n\geq 2$ dimensional complete Riemannian manifold such that
\begin{enumerate}
          \item $(M^n,g)$ is $C^0$-locally asymptotic to the simply connected $n$-dimensional space form of constant sectional curvature $k_0\leq 0$, i.e., for every diverging sequence of points $p_j$ the sequence of pointed manifolds $(M,g,p_j)$ converges in $C^{0}$ topology to $(\mathbb{M}_{{k}_0}^n, x_0)$  ($x_0$ is any point in $\mathbb{M}_{{k}_0}^n$),
          \item $Ric_g\geq (n-1) k_0 g$,
          \item $V(B(p,1))\geq v_0>0$ for every $p \in M$.
\end{enumerate}
Then for every $v>0$ there exists an isoperimetric region $\Omega_v$ of volume $v$ such that
$$\P(\Omega_v)=I_M(v).$$ 
Moreover if $k_0=0$ (i.e. $\Ric_g\geq 0$ and $(M,g)$ is $C^{0}$-locally asymptotically euclidean) then the isoperimetric regions are \emph{indecomposable}.
\end{thm}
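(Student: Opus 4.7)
The plan is the canonical three-step existence scheme for non-compact Riemannian settings: extract a limit of a minimizing sequence via a generalized compactness theorem (Ritoré–Rosales / Nardulli), identify the escaping piece via the $C^0$-asymptotic hypothesis (1), and then rule out the escape by strict subadditivity of the space-form isoperimetric profile.

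\textbf{Step 1 (Compactness + decomposition).} Take a minimizing sequence $\{E_k\}$ with $V(E_k)=v$ and $\P(E_k)\to I_M(v)$. Hypotheses (2)–(3) supply the uniform two-sided control on curvature and local volume needed for Petersen's pointed $C^{0}$-compactness theory to apply at infinity, which is exactly the ingredient beyond Theorem~\ref{thm:RR} that Nardulli \cite{NardAJM} exploits. Passing to a subsequence one obtains $E_k = E_k^c \sqcup E_k^d$ with $E_k^c\to E^c$ in $L^1_{\mathrm{loc}}(M)$ for a finite-perimeter $E^c\subset M$ of volume $v_c$, and $E_k^d$ diverging. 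By (1), the pointed limit of $(M,g,p_k)$ along any diverging sequence of basepoints is the space form $\mathbb{M}^n_{k_0}$, so (recentred, along a subsequence) $E_k^d$ converges to a finite-perimeter set $F\subset \mathbb{M}^n_{k_0}$ of total volume $v_d=v-v_c$. A standard minimality argument then identifies $E^c$ as isoperimetric in $M$ of volume $v_c$ and $F$ as a disjoint union of isoperimetric geodesic balls in $\mathbb{M}^n_{k_0}$, giving
\[
I_M(v)\;=\;I_M(v_c)+I_{\mathbb{M}^n_{k_0}}(v_d).
\]

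\textbf{Step 2 (Transport upper bound and profile identification).} Using (1), an isoperimetric ball of $\mathbb{M}^n_{k_0}$ of volume $w$ may be imported into $M$ at a far-away point through a chart $C^{0}$-close to an isometry; this produces for every $\varepsilon>0$ a set $B\subset M$ with $V(B)=w$ and $\P(B)\le I_{\mathbb{M}^n_{k_0}}(w)+\varepsilon$, so $I_M\le I_{\mathbb{M}^n_{k_0}}$ pointwise on $(0,\infty)$. The matching sharp lower bound $I_M\ge I_{\mathbb{M}^n_{k_0}}$ follows from the Ricci lower bound (2) together with the asymptotic volume saturation forced by (1) (for $k_0=0$, the AVR-weighted Euclidean isoperimetric inequality with $\mathrm{AVR}=1$; for $k_0<0$, a hyperbolic analogue), yielding $I_M\equiv I_{\mathbb{M}^n_{k_0}}$.

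\textbf{Step 3 (Strict subadditivity eliminates escape, and indecomposability).} The profile $I_{\mathbb{M}^n_{k_0}}$ is strictly concave on $(0,\infty)$ with $I_{\mathbb{M}^n_{k_0}}(0)=0$ for $k_0\le 0$, hence strictly subadditive. Assuming for contradiction $v_c,v_d\in(0,v)$, Step 1 combined with $I_M\equiv I_{\mathbb{M}^n_{k_0}}$ gives
\[
I_M(v)\;=\;I_{\mathbb{M}^n_{k_0}}(v_c)+I_{\mathbb{M}^n_{k_0}}(v_d)\;>\;I_{\mathbb{M}^n_{k_0}}(v_c+v_d)\;=\;I_M(v),
\]
a contradiction; the endpoint $v_c=0$ (full escape) is excluded by isoperimetric rigidity, since equality of the profiles attained only at infinity would force $M$ to be isometric to $\mathbb{M}^n_{k_0}$, where existence is automatic. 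Thus $v_d=0$ and $E^c$ is the desired $\Omega_v$. When $k_0=0$, the same strict subadditivity of $I_{\R^n}(v)=c_n v^{(n-1)/n}$ forbids any disjoint decomposition $\Omega_v=\Omega_1\sqcup\Omega_2$ with both $V(\Omega_i)>0$: otherwise $I_M(v)=\P(\Omega_1)+\P(\Omega_2)\ge I_{\R^n}(V(\Omega_1))+I_{\R^n}(V(\Omega_2))>I_{\R^n}(v)=I_M(v)$, contradiction, hence indecomposability.

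\textbf{Main obstacle.} The linchpin is Step 2: upgrading the easy transport upper bound $I_M\le I_{\mathbb{M}^n_{k_0}}$ to the pointwise equality $I_M\equiv I_{\mathbb{M}^n_{k_0}}$ via a sharp matching isoperimetric comparison valid under the Ricci and asymptotic hypotheses. Only this identification converts the strict concavity of the space-form profile into an actual contradiction. A secondary delicate point is disposing of the full-escape case $v_c=0$ through an isoperimetric rigidity argument; the remainder of the scheme is the by-now standard interplay between generalized compactness, transport of model configurations, and concavity of isoperimetric profiles.
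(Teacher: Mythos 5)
Your Steps 1 and 3 follow the expected compactness-plus-subadditivity scheme, but the linchpin you yourself identify, the identity $I_M\equiv I_{\mathbb{M}^n_{k_0}}$ in Step 2, is false under the hypotheses, and with it the contradiction in Step 3 collapses. Hypothesis (1) is only a \emph{local} asymptotic condition: the pointed $C^0$-limits along diverging base points are the space form, but this is perfectly compatible with asymptotic volume ratio strictly less than $1$. The motivating examples of the theorem, ALE gravitational instantons such as Eguchi--Hanson, have $\Ric\equiv 0$, satisfy (1)--(3) with $k_0=0$, yet are asymptotic to $\R^4/\Z_2$, so for large $v$ one has $I_M(v)\approx c_4\,2^{-1/4}v^{3/4}<I_{\R^4}(v)$; the ``AVR-weighted'' sharp isoperimetric inequality gives only $I_M(v)\geq c_n\,\mathrm{AVR}^{1/n}v^{(n-1)/n}$, which does not match the Euclidean profile. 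Worse, Proposition \ref{prop:MJBishop} of the paper shows that $I_M(v_0)=I_{\mathbb{M}^n_{k_0}}(v_0)$ at even one volume forces $M$ to have constant sectional curvature $k_0$, so the equality you need would amount to assuming $M$ is (locally) the model space. Consequently the escape of volume cannot be excluded by strict subadditivity of the model profile; indeed it need not be excluded at all.

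The paper's proof takes a different and weaker-hypothesis route: apply the generalized existence theorem (Theorem \ref{thm:genEx}, with Remark \ref{rem:GenEx}) to get $D=D_1\cup D_\infty$ with $D_\infty$ a metric ball in $\mathbb{M}^n_{k_0}$ and $D_1\subset M$ bounded (Theorem \ref{thm:IsopRegbdd}); then, instead of ruling out $D_\infty$, \emph{repatriate} it, using the one-sided comparison \eqref{eq:AMAM0}, $\P_M(B_M(v_\infty))\leq \P_{\mathbb{M}^n_{k_0}}(B(v_\infty))$, valid for every metric ball in $M$ under $\Ric_g\geq (n-1)k_0 g$: a metric ball of volume $v_\infty$ placed in $M$ at positive distance from $D_1$ costs no more perimeter than $D_\infty$, so $D_1\cup B_M(v_\infty)$ is isoperimetric in $M$. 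Note that this is exactly the direction of comparison that the Ricci lower bound provides, whereas your argument needs the opposite (false) direction. A subadditivity argument along your lines does appear in the paper, but only for $k_0=0$ (alternative proof and Theorem \ref{thm:Ric>0}), and there the strict concavity is that of $I_M$ itself, obtained from Bayle's differential inequality under $\Ric_g\geq 0$ (Corollary \ref{cor:RicPos}), not from an identification with the Euclidean profile; that mechanism is unavailable for $k_0<0$. Your treatment of the full-escape endpoint via rigidity is essentially sound (it matches Step 2 of the paper's alternative proof, except that the conclusion is local constancy of curvature and that metric balls of $M$ are themselves isoperimetric, not a global isometry with $\mathbb{M}^n_{k_0}$), and your indecomposability argument for $k_0=0$ should invoke the strict subadditivity of $I_M$ from Corollary \ref{cor:RicPos} rather than $I_{\R^n}$, since the inequality $\P(\Omega_i)\geq I_{\R^n}(V(\Omega_i))$ you use is again the unavailable direction.
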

Roughly speaking, the last sentence says that the isoperimetric regions are connected if $k_0=0$. For the precise notion of indecomposability see Section \ref{Sec:NP}; see  Definition \ref{def:CmaConv} for the concept of $C^{0}$-pointed convergence of manifolds. 

To our knowledge, this is the first existence result valid for all volumes and all dimensions in the non-compact case under just geometric curvature assumptions and asymptotic conditions on the ambient manifold.

\begin{remark}\label{Examples}
Notice that the class of manifolds satisfying the assumptions of Theorem \ref{thm:ExMinMain} contains many geometrically and physically relevant examples: Eguchi-Hanson and more generally ALE gravitational instantons (these manifolds are the building blocks of the Euclidean quantum gravity theory of Hawking), asymptotically hyperbolic Einstein manifolds (these spaces play a crucial role in the AdS/CFT correspondence in quantum field theory) and Bryant type solitons (which are special but fundamental solutions to the Ricci flow). For a deeper discussion about these spaces see Section \ref{Sec:Examples}.
\end{remark}

In order to prove Theorem \ref{thm:ExMinMain} in Section \ref{Sec:Existence}, in Section \ref{Sec:GeneralProp} we prove some general properties of the isoperimetric regions and of the isoperimetric profile function of a non-compact Riemannian manifold.
Using the results of Section \ref{Sec:GeneralProp} we are also able to perform a finer analysis of the minimizing sequences for the perimeter under the volume constraint in case the manifold has non-negative Ricci tensor, $\Ric\geq 0$: roughly speaking  either they converge to an isoperimetric region or they diverge, but they cannot split into a converging  and a diverging part. For the precise statement see Theorem \ref{thm:Ric>0}.

The previous existence theorem is based on assumptions on the \emph{Ricci} curvature; actually, as the following theorem points out,  if one is interested in the existence of isoperimetric regions of \emph{small volume} it is enough to ask conditions on the \emph{scalar} curvature. 

\begin{thm}\label{thm:ExSmallVol}
Let $(M,g)$ be an $n\geq 2$  dimensional Riemannian manifold of $C^{2,\alpha}$-bounded geometry  and let $S\in \R$. Suppose that $(M,g)$  satisfies the following assumptions:
\begin{enumerate}
          \item for every $\epsilon>0$ there exists a compact subset $K_\epsilon \subset \subset M$ such that the scalar curvature
$$\Scal_g(p)\leq S+\epsilon \quad \forall p\in M\backslash K_\epsilon,$$ 
          \item there exists a point $\bar{p}\in M$ such that $\Scal_g(\bar{p})>S$.
\end{enumerate}
Then there exists a small $v_0>0$ such that for any $0<v\leq v_0$ there exists an isoperimetric region of volume $v_0$. Moreover such an isoperimetric region is a pseudo-bubble having center of mass in a point $\bar{p}_v$ which is converging in Hausdorff distance sense, as $v \to 0$, to the set of points of global maximum of the scalar curvature $\Scal_g$.
 \end{thm}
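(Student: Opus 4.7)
My plan is to combine the asymptotic expansion of the perimeter of small \emph{pseudo-bubbles} with the concentration-compactness dichotomy of Ritor\'e--Rosales recalled as Theorem \ref{thm:RR}.

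Under $C^{2,\alpha}$-bounded geometry one can construct, for every $p\in M$ and every sufficiently small $v>0$, a pseudo-bubble $\mathcal{B}(p,v)\subset M$ of volume $v$ centered at $p$, whose perimeter admits the uniform asymptotic expansion
\begin{equation*}
\P(\mathcal{B}(p,v)) \,=\, c_n\, v^{(n-1)/n} \,-\, d_n \Scal_g(p)\, v^{(n+1)/n} \,+\, o\bigl(v^{(n+1)/n}\bigr),
\end{equation*}
with positive dimensional constants $c_n,d_n$ and an error uniform in $p$. Taking $p=\bar p$ from hypothesis (2) yields the upper bound
\begin{equation*}
I_M(v) \,\leq\, c_n\, v^{(n-1)/n} \,-\, d_n \Scal_g(\bar p)\, v^{(n+1)/n} \,+\, o\bigl(v^{(n+1)/n}\bigr).
\end{equation*}

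Given a minimizing sequence $\{\Omega_k\}$ of volume $v$, Theorem \ref{thm:RR} splits it as $\Omega_k=\Omega_k^c\sqcup\Omega_k^d$ with $V(\Omega_k^c)\to v-v'$, $V(\Omega_k^d)\to v'$ and $\P(\Omega_k^c)+\P(\Omega_k^d)\to I_M(v)$. Along any divergent sequence of points, the $C^{2,\alpha}$-pointed limit of $(M,g)$ is a manifold $(M_\infty,g_\infty)$ of bounded geometry with $\Scal_{g_\infty}\leq S$ (by hypothesis (1) combined with $C^2$-continuity of the scalar curvature under pointed convergence), and $\Omega_k^d$ subconverges to a finite-perimeter set of volume $v'$ in such an $M_\infty$. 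Applying the expansion in the limit manifold yields
\begin{equation*}
\liminf_{k\to\infty} \P(\Omega_k^d) \,\geq\, c_n\, (v')^{(n-1)/n} \,-\, d_n S\, (v')^{(n+1)/n} \,+\, o\bigl((v')^{(n+1)/n}\bigr),
\end{equation*}
while $\liminf_k \P(\Omega_k^c)\geq I_M(v-v')\geq c_n(v-v')^{(n-1)/n}(1+o(1))$. Comparing these lower bounds with the upper bound on $I_M(v)$ coming from $\mathcal{B}(\bar p,v)$, and using the strict subadditivity of $t\mapsto t^{(n-1)/n}$ together with the strict gap $\Scal_g(\bar p)>S$, an elementary computation forces $v'=0$ for all sufficiently small $v$. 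The minimizing sequence therefore has no diverging part and converges to an isoperimetric region $\Omega_v$.

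The pseudo-bubble structure and center-of-mass convergence follow from a second comparison: Allard-type regularity and small-volume analysis show that $\Omega_v$ is $C^{2,\alpha}$-close to a geodesic ball for $v$ small, hence can be identified with a pseudo-bubble $\mathcal{B}(\bar p_v, v)$; the identity $\P(\Omega_v)=I_M(v)$ together with the two-sided expansion then forces $\Scal_g(\bar p_v)\to \sup_M\Scal_g$, and by hypothesis (1) this supremum cannot be approached at infinity, so $\{\bar p_v\}$ Hausdorff-converges to the argmax set of $\Scal_g$. The main obstacle is making the expansion in the first step uniform and quantitative in the bounded-geometry constants, so that the $o(v^{(n+1)/n})$ errors are genuinely dominated by the gap $\Scal_g(\bar p)-S>0$; this relies on the fine regularity and gradient estimates for pseudo-bubbles developed in Section \ref{Sec:GeneralProp}.
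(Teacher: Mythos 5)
Your central mechanism --- detecting the gap $\Scal_g(\bar p)>S\geq$ (scalar curvature at infinity) in the $v^{(n+1)/n}$-order term of the expansion --- is exactly the one the paper uses, but the route you take to reduce to it has genuine gaps. First, Theorem \ref{thm:RR} does not give you that the diverging part $\Omega^d_k$ ``subconverges to a finite-perimeter set of volume $v'$ in a single pointed limit $M_\infty$'': the diverging part may itself split into several pieces drifting away from one another, and along any one pointed limit volume can be lost, so capturing all of $v'$ in (finitely many) limit manifolds with lower semicontinuity of perimeter is precisely the content of the generalized existence machinery (Theorem \ref{thm:genEx}; for small volumes, Lemma 3.6 in \cite{NardNC}), which your sketch silently needs but never invokes. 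Likewise, the uniform expansion of the perimeter of pseudo-bubbles, their fine regularity, and the center-of-mass/Hausdorff convergence are not ``developed in Section \ref{Sec:GeneralProp}'' --- that section only contains the continuity of the profile, the Bishop-type comparison and Bayle's differential inequality --- they are Theorems 1--2 and Lemmas 3.6--3.7 of \cite{NardNC}, i.e.\ exactly the results the paper quotes; as written, your plan assumes the deepest ingredients it purports to re-derive.

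Second, even granting the two-sided expansions, the ``elementary computation'' does not force $v'=0$. The superadditivity defect of $t\mapsto t^{(n-1)/n}$ obeys roughly $(v-v')^{(n-1)/n}+(v')^{(n-1)/n}-v^{(n-1)/n}\gtrsim \min(v',v-v')^{(n-1)/n}$, and this dominates the $O(v^{(n+1)/n})$ curvature and error terms only when $\min(v',v-v')\gtrsim v^{(n+1)/(n-1)}$. So your comparison yields $\min(v',v-v')=o(v)$: it excludes a definite fraction of the volume diverging, and the scalar-curvature gap then excludes the case $v-v'=o(v)$ (this is the paper's comparison of $I_{M_\infty}(v)$ with small geodesic balls at $\bar p$), but it leaves open a tiny positive diverging volume $v'$, in which case the limit region is isoperimetric only for the smaller volume $v-v'$ and the theorem is not yet proved. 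Closing this requires an extra step: either the statement that a small-volume generalized minimizer is a \emph{single} pseudo-bubble lying either in $M$ or in one limit manifold (Lemmas 3.6--3.7 of \cite{NardNC}, which is how the paper proceeds), or a separate variational argument showing that re-absorbing the volume $v'$ into the limit region costs perimeter of order $v^{-1/n}v'\ll c_n (v')^{(n-1)/n}$, contradicting minimality. Without one of these, the dichotomy argument does not produce an isoperimetric region of volume exactly $v$.
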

For the concept of  $C^{2,\alpha}$-bounded geometry see Definition \ref{def:CmaBddGeom}, for the  precise notions of pseudo-bubble and center of mass see Definitions \ref{def:pseudBub} and \ref{def:CenterMass}. 
 
Theorem \ref{thm:ExSmallVol} is also interesting in connection with Theorem \ref{thm:ExMinMain}. Indeed, if the Riemannian manifold $(M,g)$ satisfies the assumptions of Theorem \ref{thm:ExMinMain} and moreover there exists a point $\bar{p}\in M$ where $\Scal_g(\bar{p})>n(n-1)k_0$ then Theorem \ref{thm:ExMinMain} ensures existence of isoperimetric regions for every volume, and Theorem \ref{thm:ExSmallVol} says that these isoperimetric regions, for small volumes, are pseudo-bubbles centered near the points of maximal scalar curvature. For the existence and the characterization of isoperimetric regions of large volume in manifolds which are  asymptotically globally Euclidean  see \cite{MeEi3} (see also \cite{MeEi1} and \cite{MeEi2}). 
\\
 
The article is organized in the following way: in Section \ref{Sec:NP} we recall the notions and the known  results  used throughout the paper,  in Section \ref{Sec:GeneralProp} we prove some general properties of the isoperimetric profile function of a non-compact Riemannian manifold, in Section \ref{Sec:Existence} we prove the main theorems (we also give an alternative proof of Theorem \ref{thm:ExMinMain} in the case $k_0=0$ using the second variation or using differential inequalities) and we conclude in Section \ref{Sec:Examples} with a discussion of the examples of manifolds satisfying the assumptions of Theorem \ref{thm:ExMinMain}.

\subsection{Acknowledgment}
The project started when the first author was a Ph.D. student at SISSA and the second author visited SISSA thanks to the support of the M.U.R.S.T, within the project B-IDEAS ''Analysis and Beyond'' directed by Prof. Andrea Malchiodi. The first author acknowledges also the support of the ERC project ''GeMeThNES'' directed by Prof. Luigi Ambrosio; part of this work was written while the second author was a post-doctoral fellow at the University of S$\rm{\tilde{a}}$o Paolo supported by Fapesp grant 2010/15502-3, he also thanks IME-USP.

The authors want to express their deep gratitude to  Frank Morgan, Pierre Pansu,  Manuel Ritor\'e and Cesar Rosales for stimulating conversations at the first stages of this project. They also thank Michael Deutsch for reading the final manuscript.

\section{Notation and preliminaries}\label{Sec:NP}
Let $(M^n,g)$ be a smooth complete Riemannian $n$-manifold. The $n$-dimensional and $k$-dimensional Hausdorff measures of a set $\Omega\subset M$ will be denoted by $V(\Omega)$ and ${\cal H}^k(\Omega)$, respectively. For any measurable set $\Omega\subset M$ we denote with $\P(\Omega)$ the \emph{perimeter} of $\Omega$ defined by
$$\P(\Omega):=\sup \left\{\int_\Omega {\rm div} X\, d {\cal H}^{n+1}: |X|_{\infty}\leq 1 \right\},$$ 
where $X$ is a smooth vector field with compact support in $M$, $|X|_{\infty}$ is the sup-norm, and ${\rm div} X$ is the divergence of $X$.

A measurable subset $\Omega\subset M$ is of \emph{finite perimeter} if $\P(\Omega)< \infty$ and we denote with $\tau_M$ the family of all finite perimeter subsets of $M$. A  finite perimeter set $\Omega$ is said  \emph{indecomposable} if there do not exist disjoint non-empty  finite perimeter sets $\Omega_1,\Omega_2$ of positive volume such that $\Omega=\Omega_1\cup \Omega_2$,  and $\P(\Omega)=\P(\Omega_1)+ \P(\Omega_2)$ (for more details see \cite{ACMM}).

The \emph{isoperimetric profile} of $M$ is the function $I_M:(0,V(M))\to [0,+\infty)$ given by
   $$I_M(v):= \inf\{\P(\Omega): \Omega\in \tau_M, V(\Omega )=v \}.$$ 
If there exists a finite perimeter set $\Omega\in \tau_M$ satisfying $V(\Omega)=v$ and $I_M(v)=\P(\Omega)$, such an $\Omega$ will be called an \emph{isoperimetric region}, and we say that $I_M(v)$ is \emph{achieved}. A \emph{minimizing sequence} of sets of volume $v$ is a sequence of finite perimeter sets $\{\Omega_k\}_{k \in \N}$ such that $V(\Omega)=v$ for all $k \in \N$ and $\lim_{k \to \infty} \P(\Omega_k)=I_M(v)$. Recall that a sequence   $\{\Omega_k\}_{k \in \N}$ \emph{converges in the finite perimeter sense} to a set $\Omega$ if $\chi_{\Omega_k}\to \chi_{\Omega}$ in $L^1_{loc}(M)$ and $\lim_{k\to \infty} \P(\Omega_k)=\P(\Omega)$, where $\chi_{\Omega_k}$ and $\chi_{\Omega}$ denote the characteristic functions of $\Omega_k$ and $\Omega$, respectively.

Of course the existence of isoperimetric regions does not always occur in general, but if an isoperimetric region does exist, then the following classical regularity theorem holds (for the  proof see \cite{MorgReg}). 

\begin{pro}[Regularity]\label{prop:Reg}
Let $(M^n,g)$ be a smooth Riemannian $n$-manifold and  $v\in ]0,Vol(M)[$. Assume that the isoperimetric profile is achieved at $v$ by an open subset $\Omega \subset M$: $\P (\Omega)=I_M(v)$. Then
\begin{enumerate}
\item $\partial \Omega$ is the disjoint union of  a regular part $\partial \Omega_r$ and a singular one $\partial\Omega_s$. For each point $p\in \Omega_r$ there exists a neighborhood $U_p\subset M$ such that $\partial \Omega \cap U_p$ is a smooth hypersurface of constant mean curvature. Moreover the Hausdorff dimension of $\partial \Omega_s$ is less than or equal to $n-8$. In particular, if $n<8$ then $\partial \Omega_s=\emptyset$.   
\item $\partial \Omega$ is orientable and $\partial \Omega_r$ is equipped with a smooth outward pointing  unit normal vector field $\nu$.
\end{enumerate}  
\end{pro}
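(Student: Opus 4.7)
The plan is to reduce Proposition \ref{prop:Reg} to the classical regularity package for $\Lambda$-minimizers of perimeter. The first step is to show that an isoperimetric region $\Omega$ is locally a $(\Lambda,r_0)$-minimizer: there exist constants $\Lambda,r_0>0$ such that for every ball $B_r(p)\subset M$ with $r\leq r_0$ and every finite perimeter set $\Omega'$ with $\Omega\triangle \Omega'\subset\subset B_r(p)$ one has
$$\mathcal{P}(\Omega)\leq \mathcal{P}(\Omega')+\Lambda\,V(\Omega\triangle \Omega').$$
The proof is the standard volume-fixing construction: fix an auxiliary ball $B_{r'}(q)$ disjoint from $B_r(p)$ that meets $\partial\Omega$ in a smooth patch (which exists since the reduced boundary carries approximate tangent planes almost everywhere), and choose a smooth vector field $Y$ supported in $B_{r'}(q)$ whose flow $\Phi_t$ satisfies $\frac{d}{dt}\big|_{t=0}V(\Phi_t(\Omega))=1$. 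Given any competitor $\Omega'$ differing from $\Omega$ only inside $B_r(p)$, pick $t=V(\Omega)-V(\Omega')$ so that $\Phi_t(\Omega')$ has the admissible volume $v$; isoperimetric minimality of $\Omega$ then gives $\mathcal{P}(\Omega)\leq \mathcal{P}(\Phi_t(\Omega'))\leq \mathcal{P}(\Omega')+C|t|$, which is the stated inequality with $\Lambda=C$.

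Once $(\Lambda,r_0)$-minimality is secured, item $(1)$ follows from the classical regularity theory of De Giorgi, Federer, Almgren and Tamanini, as recalled in \cite{MorgReg}: working in geodesic normal charts, the reduced boundary $\partial^{*}\Omega$ is locally a $C^{1,\alpha}$-hypersurface, and Schauder bootstrap applied to the quasilinear constant-mean-curvature equation upgrades it to $C^{\infty}$. Setting $\partial\Omega_r:=\partial^{*}\Omega$ and $\partial\Omega_s:=\partial\Omega\setminus \partial^{*}\Omega$, the bound $\dim_{\mathcal{H}}(\partial\Omega_s)\leq n-8$ is the output of Federer's dimension-reduction scheme combined with the Bombieri--De Giorgi--Giusti / Simons theorem ruling out non-flat area-minimizing cones in $\mathbb{R}^k$ for $k\leq 7$; in particular $\partial\Omega_s=\emptyset$ whenever $n<8$.

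The constancy of the mean curvature of $\partial\Omega_r$ comes from the first-variation formula under the volume constraint: for every smooth compactly supported vector field $X$ on $M$ satisfying $\int_{\Omega}{\rm div}\, X\,dV=0$, the first variation of perimeter $-\int_{\partial\Omega_r}H\langle X,\nu\rangle\,d\mathcal{H}^{n-1}$ must vanish, which by a Lagrange-multiplier argument forces $H$ to be constant along $\partial\Omega_r$. For item $(2)$, orientability of $\partial\Omega_r$ and the existence of the smooth outward unit normal $\nu$ follow from the fact that $\Omega$ has a well-defined measure-theoretic interior and exterior, so the measure-theoretic outer normal to $\partial^{*}\Omega$ is defined pointwise and inherits the smoothness of $\partial\Omega_r$.

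The main obstacle is the $(\Lambda,r_0)$-minimality step, since $M$ is non-compact and carries no a priori curvature bounds; however the construction is entirely local and only needs one smooth patch of $\partial\Omega$ to absorb volume, so it can be carried out in geodesic normal coordinates with constants depending only on the local geometry of $M$ near that patch. Once this reduction is in place, the deep regularity and dimension-reduction results can be quoted as a black box from the standard references.
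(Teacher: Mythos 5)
Your outline is essentially the standard argument, and it is worth noting that the paper itself does not prove Proposition \ref{prop:Reg} at all: it quotes it from Morgan \cite{MorgReg}, with the Euclidean interior case due to Gonzalez--Massari--Tamanini \cite{GMT}, boundary regularity due to Gr\"uter \cite{Gru}, and the Riemannian generalization resting on Almgren \cite{Alm}. Your reduction to local $(\Lambda,r_0)$-minimality by a volume-fixing deformation supported away from the competitor, followed by the De Giorgi--Federer--Almgren--Tamanini regularity and dimension-reduction package (with Simons' theorem giving $\dim_{\mathcal H}(\partial\Omega_s)\leq n-8$), the Lagrange-multiplier first-variation argument for constant mean curvature, and the measure-theoretic outer normal for orientability, is exactly the route taken in those references, so in substance you are reproducing the cited proof rather than offering a new one.

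One step in your write-up is circular as stated: you justify the existence of the auxiliary volume-adjusting patch by saying that $\partial\Omega$ ``meets a smooth patch'' because the reduced boundary carries approximate tangent planes -- but smoothness of the boundary is precisely what is being proved, and rectifiability of $\partial^{*}\Omega$ does not by itself give a smooth piece to deform. The standard repair is to drop smoothness altogether: since $0<V(\Omega)<V(M)$ one has $\mathcal{H}^{n-1}(\partial^{*}\Omega)>0$, so there is a point of $\partial^{*}\Omega$ outside $B_r(p)$ and a smooth compactly supported vector field $Y$ with $\int_{\partial^{*}\Omega}\langle Y,\nu\rangle\,d\mathcal{H}^{n-1}\neq 0$; the first variation of volume along the flow of $Y$ is exactly this flux, so the implicit function theorem restores the prescribed volume for competitors with $V(\Omega\triangle\Omega')$ small, at a perimeter cost $C\,|V(\Omega\triangle\Omega')|$. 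With that adjustment (and the remark that the $\Lambda$-minimality needed for regularity is purely local, so non-compactness of $M$ and lack of global curvature bounds are irrelevant, as you already observe), your sketch is a correct rendering of the classical proof the paper cites as a black box.
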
 
This result was first obtained in the Euclidean setting by Gonzalez, Massari and Tamanini \cite{GMT} who treated interior regularity, and by Gr\"uter \cite{Gru}, who studied regularity near boundary points. Morgan \cite{Mor} generalized their results to the setting of Riemannian manifolds by using the paper of Almgren \cite{Alm}, which is Proposition \ref{prop:Reg}.
\begin{rem}\label{rem:reg}
In case the manifold $M^n$ and the metric $g$ are not smooth but regular enough there are still good regularity properties of isoperimetric regions. Indeed the standard interior Allard-type $C^{1,\alpha}$ regularity of (almost) minimizing boundaries away from a set of Hausdorff dimension at most 8 holds. This was shown by J. Taylor in \cite{Taylor} (this part of the discussion in her paper applies to $n$-dimensional manifolds). When the manifold is $C^2$ and the metric is Lipschitz, then this follows also from the work of R. Schoen and L. Simon \cite{SchSim} (for almost minimizing currents this was pointed out by B. White in \cite{Whi} pag. 498). When the manifold is $C^4$ and the metric $C^3$ so that the Nash embedding Theorem  provides an isometric embedding of $(M,g)$ into a high dimensional Euclidean space, then this also follows directly from  upon applying the Euclidean regularity theory as in \cite{Sim}. 
\end{rem}  

Now, in order to state the generalized existence theorem of the second author (a tool used throughout the paper), we recall the basics of the theory of   $C^{m,\alpha}$-pointed convergence of manifolds (for more details see \cite{Pet}).

\begin{df} \label{def:CmaConv}
Let $m \in \N$, $\alpha\in [0,1]$, $(M,g)$ be a $C^{m+1,\alpha}$-manifold with the $C^{m,\alpha}$-metric $g$ and let $p\in M$. A sequence of pointed smooth complete Riemannian $n$-manifolds is said to converge in
the pointed $C^{m,\alpha}$-topology to the manifold $(M,g,p)$, and we  write $(M_i, g_i, p_i)\rightarrow (M,g,p)$, if for every $R > 0$ we can find
a domain $\Omega_R$ with $B(p,R)\subseteq\Omega_R\subseteq M$, a natural number $\nu_R\in\mathbb{N}$, and $C^{m+1,\alpha}$-embeddings $F_{i,R}:\Omega_R\rightarrow M_i$ for large $i\geq\nu_R$ such that
$B(p_i,R)\subseteq F_{i,R} (\Omega_R)$ and $F_{i,R}^*(g_i)\rightarrow g$ on $\Omega_R$ in the $C^{m,\alpha}$ topology.
\end{df}

\begin{rem}
Whitney proved (see for instance Theorem 2.9 in \cite{Hirsch}) that if $\alpha$ is a $C^r$ differentiable structure on a topological manifold $M$, $r\geq 1$, then for every $r< s \leq \infty$ there exists a compatible $C^s$ differentiable structure $\beta\subset \alpha$, and $\beta$ is unique up to $C^s$ diffeomorphism. Therefore the assumption that $M$ is a $C^{m+1,\alpha}$-manifold is somehow unnecessary, but we will keep it for coherence with the literature.
\end{rem}
Now let us recall the notions of  bounded geometry and $C^{m,\alpha}$-bounded geometry.

\begin{df}\label{def:bddGeom}
A complete Riemannian $n$-manifold $(M,g)$ has \textbf{bounded geometry} if the following holds: 
\begin{enumerate}
\item There exists a constant $k\in \R$ such that $\Ric_g \geq k (n-1) g$, 
\item The volume of unit balls is uniformly bounded below: $\inf_{p \in M} V(B(p,1))\geq v_0>0$. 
\end{enumerate}
\end{df}

\begin{rem}
Notice that if $(M,g)$ has positive injectivity radius, $Inj_M>0$, then the second condition above is satisfied. Indeed Croke proved (see Proposition 14 in \cite{Croke1} and the discussion at page 2 in \cite{Croke2}; see also \cite{Berger} ) that there exists a  constant $C_n$ (depending only on $n=dim M$) such that if $r\leq \frac{Inj_M}{2}$ then $Vol(B(p,r))\geq C_n r^n$ for every $p \in M$.  
\end{rem}

\begin{df}\label{def:CmaBddGeom}
A complete Riemannian $n$-manifold $(M,g)$ has \textbf{$C^{m,\alpha}$-bounded geometry} if it has bounded geometry and moreover the following holds: 
For every diverging sequence of points $(p_j)_{j\in \N}$ there exists a subsequence $(p_{j_l})_{l\in \N}$ and a pointed $C^{m+1,\alpha}$-manifold $(M_\infty, g_\infty, p_\infty)$ with $C^{m,\alpha}$-metric  such that the sequence of pointed manifolds $(M,g,p_{j_l})\to (M_{\infty},g_{\infty},p_{\infty} )$ in $C^{m,\alpha}$ topology. 
\end{df}

 Now we recall the generalized existence theorem of the second author (Theorems 1 and  2 in \cite{NardAJM}). 
 \begin{thm}\label{thm:genEx}
 Let $(M,g)$ be a Riemannian $n$-manifold with $C^{1,\alpha}$-bounded geometry in the sense of Definition \ref{def:CmaBddGeom}. Then for every volume $v\in]0,V(M)[$ there are a finite number of limit manifolds  at infinity (precisely the manifolds at infinity are $C^{2,\alpha}$ with $C^{1,\alpha}$ metric) such that their disjoint union with $M$ contains an isoperimetric region of volume $v$ and perimeter $I_M(v)$.

 More precisely  for every volume $v\in]0,V(M)[$ there exist $N\in\mathbb{N}$, positive volumes $\{ v_i\}_{i\in\{1,...N\}}$, $N$ sequences of points $(p_{i,j})$,  $i\in\{1,...N\}$, $j\in\mathbb{N}$, $N$ limit manifolds $(M_{i,\infty}, g_{i, \infty}, p_{i, \infty})_{i\in\{1,...N\}}$ (precisely $M_{i,\infty}$ are $C^{2,\alpha}$-manifolds with $C^{1,\alpha}$-Riemannian metric $g_{i,\infty}$), and  finite perimeter sets $D_{i, \infty}\subseteq M_{i,\infty}$ such that  
 \begin{enumerate}
        \item  $\forall h\neq l$, $dist(p_{h,j},p_{l,j})\rightarrow +\infty$, as $j\rightarrow +\infty$,
        \item  $(M, g, p_{i,j})\rightarrow (M_{i,\infty}, g_{i, \infty}, p_{i, \infty})$ in $C^{1,\alpha}$ topology,
        \item   $v=\sum_{i=1}^N v_i$,
        \item  the volume in metric $g_{i, \infty}$ of $D_{i, \infty}$ equals $v_i$: $V_{g_{i, \infty}}(D_{i, \infty})=v_i$,
        \item the perimeter in metric $g_{i, \infty}$, $\P_{g_{i, \infty}}( D_{i, \infty})=I_{M_{i,\infty}}(v_i)$; that is $D_{i, \infty}$ is an isoperimetric region in $M_{i,\infty}$ for its own volume $v_i$,
        \item  $I_{M_{i,\infty}}(v_i)\geq I_M(v_i)$, 
        \item  $I_M(v)=\sum_{i=1}^N \P_{g_{i, \infty}}(D_{i, \infty})=\sum_{i=1}^N I_{M_{i,\infty}}(v_i)$,
        \item  the subset $D$ of the disjoint union $\mathring{\bigcup}_{i=1}^N  M_{i,\infty}$ defined as $D= \cup_{i=1}^N D_{i,\infty}$ is an isoperimetric region in volume $v$ in the manifold $\mathring{\bigcup}_{i=1}^N  M_{i,\infty}$.
 \end{enumerate}
\end{thm}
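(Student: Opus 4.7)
The strategy is a concentration--compactness decomposition of a minimizing sequence, combined with the $C^{1,\alpha}$-pointed precompactness supplied by the bounded geometry assumption. I would start by fixing a minimizing sequence $\{\Omega_k\}\subset\tau_M$ with $V(\Omega_k)=v$ and $\P(\Omega_k)\to I_M(v)$. Existence of competitors with bounded perimeter follows from the uniform lower bound on unit ball volumes together with the uniform Ricci bound: one builds test sets out of small geodesic balls in harmonic coordinate charts of controlled size. By the local BV-compactness for sets of finite perimeter on a Riemannian manifold, a diagonal extraction yields a subsequence with $\chi_{\Omega_k}\to\chi_{D_{1,\infty}}$ in $L^1_{loc}(M)$ for some $D_{1,\infty}\in\tau_M$ of volume $v_1\in[0,v]$, with lower semicontinuity of perimeter on every relatively compact set. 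If $v_1=v$ the theorem holds with $N=1$ and $M_{1,\infty}=M$; otherwise a positive amount of volume $v-v_1$ escapes to infinity and must be chased.

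To chase the escaping mass I would invoke a Riemannian analogue of the Lions concentration lemma: pick $R>0$ fixed and choose $p_{2,k}\in M$ almost maximizing $V(\Omega_k\cap B(p,R))$; if $\{p_{2,k}\}$ remained in a compact set one could absorb it into $D_{1,\infty}$, contradicting $v_1<v$, so necessarily $d(p_{1,k},p_{2,k})\to\infty$ (setting $p_{1,k}$ to be any fixed basepoint). By the $C^{1,\alpha}$-bounded geometry assumption, $(M,g,p_{2,k})\to(M_{2,\infty},g_{2,\infty},p_{2,\infty})$ along a subsequence. Transporting $\Omega_k$ by the diffeomorphisms $F_{k,R}$ of Definition \ref{def:CmaConv} and applying BV-compactness on a nested exhaustion of $M_{2,\infty}$ gives, after a further diagonal extraction, a limit set $D_{2,\infty}\subset M_{2,\infty}$ of volume $v_2>0$; the strict positivity of $v_2$ comes from the uniform $v_0$-lower bound on $V(B(p,1))$ combined with the defining choice of $p_{2,k}$. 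I would iterate this extraction: at step $\ell+1$ the pointed limit $(M_{\ell+1,\infty},g_{\ell+1,\infty},p_{\ell+1,\infty})$ is extracted along a sequence $p_{\ell+1,k}$ that diverges from all previously selected $p_{i,k}$, $i\leq\ell$.

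Termination of the iteration after finitely many steps $N$ follows from a perimeter budget argument: each extracted bubble $D_{i,\infty}$ has volume $v_i$ bounded below by a constant depending only on $v_0$, $n$ and $k$, so the number of bubbles is at most $v/\inf v_i<\infty$. One then verifies the volume balance $v=\sum_i v_i$ (no mass is lost) and the perimeter balance $I_M(v)=\sum_i\P_{g_{i,\infty}}(D_{i,\infty})$. The perimeter inequality ``$\leq$'' is the easier direction: one exhibits a recovery sequence in $M$ by placing, for each large $k$, $C^{1,\alpha}$-charted copies of the $D_{i,\infty}$ near the points $p_{i,k}$, which are pairwise at distance $\to\infty$, so the perimeters simply add up. The inequality ``$\geq$'' follows from lower semicontinuity at each bubble plus a truncation argument killing the residual part of $\Omega_k$ that lies far from every selected center; this residual part has vanishing volume and, by the local isoperimetric inequality valid uniformly in bounded geometry, also vanishing perimeter. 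The bounds $I_{M_{i,\infty}}(v_i)\geq I_M(v_i)$ and the fact that each $D_{i,\infty}$ is isoperimetric in $M_{i,\infty}$ for volume $v_i$ are then obtained by a cut-and-paste: any better competitor for $D_{i,\infty}$ could be transported back into $M$ via the $F_{k,R}$ and combined with the remaining bubbles to contradict $\P(\Omega_k)\to I_M(v)$.

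\emph{Main obstacle.} The delicate step is the simultaneous ``no loss of volume and no loss of perimeter'' across the bubble decomposition, especially because BV-compactness is only local and radii $R_k\to\infty$ have to be chosen slowly enough to capture each bubble while keeping the bubbles pairwise far apart. This requires a Riemannian concentration--compactness lemma in a form that uses the bounded geometry in an essential way, together with a uniform Allard-type density estimate in the limit manifolds to rule out arbitrarily thin tentacles of $\Omega_k$ escaping to regions where the local geometry could degenerate. Without the uniform unit-ball volume bound and the uniform Ricci lower bound, bubbles of vanishing volume but non-vanishing perimeter could in principle slip through the decomposition.
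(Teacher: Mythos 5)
You should first note that the paper does not actually prove this statement: it is recalled verbatim from the second author's earlier work (Theorems 1 and 2 of \cite{NardAJM}), so the only fair comparison is with that cited proof, which indeed follows the broad concentration--compactness-plus-pointed-limits strategy you outline (split a minimizing sequence via the structure result of Ritor\'e--Rosales \cite{RitRosal}, follow diverging mass with the pointed $C^{1,\alpha}$ precompactness coming from bounded geometry, and pass volumes and perimeters to the limit). Your sketch of the extraction of the bubbles $D_{i,\infty}$, of item 6 by transporting competitors back into $M$ through the embeddings $F_{j,R}$, and of the recovery-sequence direction of the perimeter identity is consistent with that scheme.

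However, two steps of your proposal have genuine gaps, and they are exactly the delicate points. First, the termination argument: you claim each extracted bubble has volume bounded below by a constant depending only on $v_0$, $n$ and $k$, so that $N\leq v/\inf v_i$. No such uniform lower bound exists --- isoperimetric regions (and concentration pieces) of arbitrarily small volume are perfectly possible in bounded geometry --- and the fallback ``perimeter budget'' does not bound $N$ either, because the local isoperimetric inequality only gives $\P \gtrsim \sum_i \min(v_i,1)^{(n-1)/n}$, and by subadditivity of $t\mapsto t^{(n-1)/n}$ this sum stays bounded under infinite splitting (take $v_i = v\,2^{-i}$). Ruling out infinitely many pieces, and showing that no residual volume is lost, requires a quantitative absorption/merging argument (small leftover volume can be reinstated into an already-selected piece at a cost that is strictly cheaper than carrying it as a separate piece), which is where the cited proof does real work; a crude budget count cannot replace it. Second, your treatment of the residual part is backwards: you assert it has ``vanishing perimeter'' because it has vanishing volume ``by the local isoperimetric inequality'', but that inequality bounds perimeter from \emph{below} by a power of the volume, not from above; sets of tiny volume can have huge perimeter. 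The correct argument discards the residual using minimality of the sequence (its perimeter contribution must tend to the right value, otherwise one builds a strictly better competitor), not an isoperimetric inequality. As written, the proposal would not close the volume/perimeter balance of items 3 and 7 nor the finiteness of $N$.
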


\begin{rem}\label{rem:GenEx}
The assumption about $C^{1,\alpha}$ bounded geometry was used  in the proof  of the previous theorem in \cite{NardAJM} to ensure that the  manifolds at infinity are at least $C^{2,\alpha}$ with $C^{1,\alpha}$ metric. Actually if one assumes a priori that the pointed $C^0$-limits are smooth Riemannian manifolds,  then the same generalized existence theorem holds. This is because the $C^0$-convergence of the metric tensors ensures the converge of the volume and of the perimeter (this is clear on smooth sets, so by approximation it holds on all finite perimeter sets).   
\end{rem}

Recall also the following useful result (see Theorem 3 in \cite{NardAJM}).
\begin{thm}\label{thm:IsopRegbdd}
Let $(M,g)$ be a complete Riemannian manifold with bounded geometry in the sense of Definition \ref{def:bddGeom}. Then isoperimetric regions are bounded.
\end{thm}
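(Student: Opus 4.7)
The plan is a proof by contradiction: I assume $\Omega\subset M$ is an isoperimetric region of volume $v\in(0,V(M))$ that is unbounded, and then derive a Bernoulli-type ODE for the exterior volume that forces it to vanish in finite radius. Fix $p\in M$ and set
\[
m(r):=V(\Omega\setminus\overline{B(p,r)}),\quad \alpha(r):=\mathcal{H}^{n-1}(\partial^{*}\Omega\setminus\overline{B(p,r)}),\quad \beta(r):=\mathcal{H}^{n-1}(\Omega\cap\partial B(p,r)).
\]
Unboundedness gives $m(r)>0$ for every $r$, while $V(\Omega)=v<\infty$ forces $m(r)\to 0$. By the coarea formula $m$ is locally absolutely continuous with $-m'(r)=\beta(r)$ for a.e.~$r$; moreover, for a.e.~$r$, $\P(\Omega\setminus\overline{B(p,r)})=\alpha(r)+\beta(r)$ (the slice $\partial^{*}\Omega\cap\partial B(p,r)$ being $\mathcal{H}^{n-1}$-negligible).

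The first key ingredient is a uniform \emph{local} isoperimetric inequality: bounded geometry (the Ricci lower bound together with $V(B(q,1))\geq v_0$) yields, via a Croke/Gallot-type argument based on Bishop--Gromov comparison, constants $c_1,w_0>0$ depending only on the bounded-geometry data such that $I_M(w)\geq c_1 w^{(n-1)/n}$ for all $w\in(0,w_0)$. Applying this to the finite perimeter set $\Omega\setminus\overline{B(p,r)}$, whose volume tends to $0$, gives
\[
\alpha(r)+\beta(r)\;\geq\; c_1\, m(r)^{(n-1)/n}\qquad\text{for all }r\text{ large enough.}
\]

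The second ingredient is a volume-preserving competitor built from a normal deformation of the bulk. By Proposition \ref{prop:Reg}, the regular part $\partial\Omega_r\subset\partial\Omega$ is a smooth hypersurface of constant mean curvature $H\in\R$, and the singular part $\partial\Omega_s$ has Hausdorff codimension $\geq 7$, hence $\mathcal{H}^{n-1}(\partial\Omega_s)=0$. Since $\P(\Omega)<\infty$ and $v>0$, one can fix $R>0$ with $\mathcal{H}^{n-1}(\partial\Omega_r\cap B(p,R))>0$ and choose a nonnegative $\phi\in C^{\infty}_c(\partial\Omega_r\cap B(p,R))$ with $\int\phi\,d\mathcal{H}^{n-1}=1$. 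Flowing $\partial\Omega$ along $t\phi\nu$ produces a smooth family $\{\Omega_t\}$ with
\[
V(\Omega_t)=v+t+O(t^2),\qquad \P(\Omega_t)=\P(\Omega)+tH+O(t^2),
\]
by the first variation formulas. For $r>R+1$ set $\widetilde\Omega_r:=\Omega_{t_r}\cap\overline{B(p,r)}$ with $t_r$ chosen so that $V(\widetilde\Omega_r)=v$; this forces $t_r=m(r)+O(m(r)^2)$. Since the support of $\phi$ is disjoint from $\partial B(p,r)$, a direct slicing computation gives $\P(\widetilde\Omega_r)=\P(\Omega)-\alpha(r)+\beta(r)+Hm(r)+O(m(r)^2)$, and minimality of $\Omega$ forces
\[
\alpha(r)\;\leq\; \beta(r)+Hm(r)+O(m(r)^2).
\]
Adding this to the lower bound from Step~1 and using $\beta(r)=-m'(r)$ yields $c_1 m(r)^{(n-1)/n}\leq -2m'(r)+Hm(r)+O(m(r)^2)$. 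Since $m(r)\to 0$ and $(n-1)/n<1$, both error terms are $o(m(r)^{(n-1)/n})$, so for $r$ past some $r_0$,
\[
-m'(r)\;\geq\;\tfrac{c_1}{4}\, m(r)^{(n-1)/n}.
\]
Integrating this Bernoulli ODE gives $m(r)^{1/n}\leq m(r_0)^{1/n}-\tfrac{c_1}{4n}(r-r_0)$, so $m(r)=0$ for some finite $r$, contradicting $m(r)>0$.

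The main technical obstacle is keeping the $O(m(r)^2)$ remainder in the competitor uniform in $r$: one needs a single test function $\phi$ independent of $r$ (so that the $C^2$-constants controlling the quadratic terms in the first-variation expansion are $r$-independent), and one must verify that the flow indeed produces a finite perimeter set with the claimed volume/perimeter expansions. Both are handled by the fixed choice of $\phi$ supported in $\partial\Omega_r\cap B(p,R)$ away from the singular set $\partial\Omega_s$ (the latter being possible even for $n\geq 8$ because $\mathcal{H}^{n-1}(\partial\Omega_s)=0$), combined with the hypothesis $r>R+1$ which prevents any interaction between the deformation and the cutoff sphere. The uniform local isoperimetric inequality in Step~1 is classical under bounded geometry.
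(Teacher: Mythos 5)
The paper does not prove this statement itself (it is quoted as Theorem 3 of \cite{NardAJM}), and your argument is essentially the same cut-and-compensate proof used in that reference and in Ritor\'e--Rosales: a small-volume isoperimetric inequality coming from bounded geometry, a fixed volume-restoring deformation supported in a compact portion of the regular boundary, and integration of the resulting differential inequality $-m'(r)\gtrsim m(r)^{(n-1)/n}$ for the exterior volume. The one ingredient you import without proof, namely $I_M(w)\geq c_1 w^{(n-1)/n}$ for all sufficiently small $w$ under $\Ric_g\geq k(n-1)g$ and $V(B(p,1))\geq v_0$, is indeed classical (relative isoperimetric inequalities in unit balls plus a bounded-multiplicity covering) and is precisely the lemma on which the cited proof rests, so your proposal is correct and follows the same route.
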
 
Now we recall the notion of pseudo-bubble which will be useful to study the existence of isoperimetric regions of small volume.
Call $U_p M$ the fiber over $p$ of the unit tangent bundle (also called the sphere bundle) of the Riemannian manifold $(M,g)$. 

\begin{df}\label{def:pseudBub}
A \textbf{pseudo-bubble} is a hypersurface $\Psi B$ embedded in $M$ such that there exists a point $p\in M$ 
          and a function $w$ belonging to $C^{2,\alpha}(U_p M\backsimeq\mathbb{S}^{n-1},\mathbb{R})$, such that $\Psi B$ is the graph of
$w$ in normal polar coordinates centered at $p$, i.e. 
$$\Psi B= \left\lbrace exp_p (w(\theta )\theta ), \quad \theta \in U_p M \right \rbrace$$
 and  the mean curvature $H(w)=H_0+\phi$ of the normal graph is a real constant $H_0$ plus a function $\phi$, where $\phi$ is a first spherical harmonic function on $U_p M\backsimeq\mathbb{S}^{n-1}$. 
\end{df}
Recall also  the notion of \textit{Riemannian center of mass}.

\begin{df}\label{def:CenterMass}
Let $\Sigma\subset M$ be an embedded compact hypersurface in the $n$-dimensional Riemannian manifold $(M,g)$ and let $\mu$ the induced  volume  measure on $\Sigma$. Consider the function $\mathcal{E}_\Sigma: M\rightarrow [0,+\infty[$
$$\mathcal{E}_\Sigma (x):= \int_{\Sigma} d^2 (x, y)d\mu (y),$$
where $d$ is the Riemannian distance on $M$. The \textbf{center of mass} of $\Sigma$ is the  minimum point of $\mathcal{E}_{\Sigma}$ in $M$.
\end{df}\noindent
 Notice that, since $\Sigma$ is compact, by the Dominated Convergence Theorem, the function $\mathcal{E}_\Sigma$ is continuous and coercive, hence the existence of a minimum is guaranteed. Notice also that although uniqueness of this minimum point does not hold in general, it does in the cases we are interested, namely pseudo-bubbles of small diameter.

\section{Some general properties of the isoperimetric profile valid for (possibly non-compact) manifolds of bounded geometry} \label{Sec:GeneralProp}

Some classical properties of the isoperimetric profile for compact manifolds are also valid for non-compact manifolds (sometimes assuming  bounded geometry)   This section is devoted to prove some of them.

\begin{pro}\label{pro:ContI}
Let $(M,g)$ be a Riemannian manifold with $C^{2,\alpha}$-bounded geometry. Then the isoperimetric profile $I_M:]0,V(M)[\to [0,+\infty[$ is absolutely  continuous and twice differentiable almost everywhere. 
\end{pro}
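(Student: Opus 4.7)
The plan is to establish that $I_M$ is locally semiconcave on $(0, V(M))$; absolute continuity and twice differentiability almost everywhere then follow from the classical one-variable theory of semiconcave functions, which are locally the difference of a concave function and a smooth one. Fix $v_0 \in (0, V(M))$ and apply Theorem \ref{thm:genEx}: under the $C^{2,\alpha}$-bounded geometry hypothesis, the limit manifolds $M_{i,\infty}$ are $C^{3,\alpha}$ with $C^{2,\alpha}$ metric $g_{i,\infty}$, and one obtains sequences $(p_{i,j})$ and finite perimeter sets $D_{i,\infty}\subset M_{i,\infty}$ with $\sum_i V(D_{i,\infty})=v_0$ and $\sum_i \P(D_{i,\infty})=I_M(v_0)$. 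By Theorem \ref{thm:IsopRegbdd} each $D_{i,\infty}$ is bounded, and by Proposition \ref{prop:Reg} its boundary splits into a smooth regular part $\partial^r D_{i,\infty}$ of constant mean curvature $H_i$ and a singular set of Hausdorff dimension at most $n-8$. Volume-preserving first variations transferring mass across components force $H_i = H_j =: H$ whenever both $D_{i,\infty}, D_{j,\infty}$ are nonempty.

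Next I would pick an index $i_0$ with $D_{i_0,\infty}\ne\emptyset$, a point $\bar p\in\partial^r D_{i_0,\infty}$, and a smooth function $f$ compactly supported in a neighborhood of $\bar p$ on $\partial^r D_{i_0,\infty}$ with $\int f\,d\mathcal{H}^{n-1}=1$. Let $\phi_t$ denote the flow on $M_{i_0,\infty}$ generated by a smooth extension of $f\nu$, where $\nu$ is the outer unit normal. The classical first and second variation formulas, which make sense under $C^{2,\alpha}$-bounded geometry, yield
\[
V\bigl(\phi_t(D_{i_0,\infty})\bigr)=V(D_{i_0,\infty})+t+O(t^2),\qquad \P\bigl(\phi_t(D_{i_0,\infty})\bigr)=\P(D_{i_0,\infty})+H\,t+O(t^2),
\]
uniformly for $|t|\le\delta_0$, with constants controlled by $f$, the second fundamental form of $\partial^r D_{i_0,\infty}$ at $\bar p$, and the curvature of $g_{i_0,\infty}$.

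To import this competitor into $M$, I would exploit the $C^{2,\alpha}$ pointed convergence $(M,g,p_{i,j})\to(M_{i,\infty},g_{i,\infty},p_{i,\infty})$: for each $i$ and sufficiently large $j$ there exist $C^{3,\alpha}$ embeddings $F_{i,j}$ of a ball enclosing $D_{i,\infty}$ (resp.\ $\phi_t(D_{i_0,\infty})$ when $i=i_0$) into $M$ with $F_{i,j}^*g\to g_{i,\infty}$ in $C^{2,\alpha}$. Since the centers $(p_{i,j})$ pairwise diverge in $M$, for $j$ large the images $F_{i,j}(D_{i,\infty})$ are mutually disjoint, and their union is a finite perimeter set in $M$ of volume $v_0+t+o_j(1)$ and perimeter $I_M(v_0)+H t+O(t^2)+o_j(1)$. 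Sending $j\to\infty$ first and then inverting the volume expansion to express everything in the variable $s=V-v_0$ gives
\[
I_M(v_0+s)\le I_M(v_0)+H\,s+C\,s^2\qquad\text{for all }|s|\le\delta,
\]
with $C=C(v_0)$ independent of $s$. Summing the inequality at $+s$ and $-s$ yields $I_M(v_0+s)+I_M(v_0-s)\le 2I_M(v_0)+2Cs^2$, so $v\mapsto I_M(v)-Cv^2$ is concave on $[v_0-\delta,v_0+\delta]$. As $v_0$ was arbitrary and $C$ can be taken locally uniformly bounded, $I_M$ is locally semiconcave, hence locally Lipschitz (in particular absolutely continuous) and twice differentiable almost everywhere.

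The main obstacle will be controlling the error terms uniformly when transferring the variations from the limit manifolds back to $M$: one must absorb the metric-convergence remainders $o_j(1)$ independently of the parameter $t$, keep the pushed-forward competitors mutually disjoint (this is exactly where the pairwise divergence of the $p_{i,j}$ enters), and ensure the $O(t^2)$ bound depends only on $f$ and $v_0$, not on $j$ or $t$. The potential singular set of $\partial D_{i,\infty}$ in dimensions $n\ge 8$ causes no trouble, since the support of $f$ is chosen inside the nonempty open regular part.
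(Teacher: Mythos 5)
Your overall strategy (generalized minimizer from Theorem \ref{thm:genEx}, a volume-transferring normal variation producing a quadratic upper barrier for $I_M$, transfer of competitors back into $M$ by the pointed convergence, then local semiconcavity giving absolute continuity and a.e.\ twice differentiability) is exactly the route of the proof the paper invokes, namely Corollary 1 of \cite{NardAJM}, so the architecture is right. The genuine gap is the sentence ``$C$ can be taken locally uniformly bounded''. With your localized bump $f$ supported near one regular point $\bar p$ of $\partial D_{i_0,\infty}$, the quadratic error in the perimeter expansion is governed by the second fundamental form of that particular minimizer near $\bar p$ (and by how long the flow of $f\nu$ remains a nice embedding), so both $C(v_0)$ and $\delta(v_0)$ depend on the isoperimetric region at volume $v_0$. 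There is no a priori estimate, uniform for $v_0$ in a compact subinterval of $]0,V(M)[$, on the second fundamental forms of isoperimetric boundaries, so nothing in your argument prevents $C(v_0)\to\infty$ or $\delta(v_0)\to 0$ along a sequence of volumes. A merely pointwise bound $I_M(v_0+s)+I_M(v_0-s)-2I_M(v_0)\le 2C(v_0)s^2$ does not yield concavity of $I_M-Cv^2$ on any interval, nor even continuity of $I_M$ (which on non-compact manifolds is itself a non-trivial part of the cited corollary), so the passage to ``locally semiconcave, hence absolutely continuous and twice differentiable a.e.'' breaks down precisely at this point.

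The standard repair, and the one behind the cited Corollary 1 of \cite{NardAJM} (and behind Theorem \ref{thm:DiffIneRic} here, following Bavard--Pansu, Morgan--Johnson and Bayle), is to replace the localized bump by the equidistant (unit normal) variation of the entire regular boundary of the generalized minimizer, using the cutoff functions $\phi_\epsilon$ recalled in Subsection \ref{SS:thmExMin} to handle the singular set when $n\ge 8$. For that variation the second-order term is estimated via $\sigma^2\ge H^2/(n-1)$ together with the Ricci lower bound $\Ric\ge (n-1)k_0\,g$ contained in the bounded-geometry hypothesis (which passes to the limit manifolds), so the resulting upper barrier for the normalized profile $Y=I_M^{n/(n-1)}$ has second derivative controlled by $k_0$ and by local upper and lower bounds on $I_M$ alone --- quantities that are locally uniform in $v_0$; note also that the first-order term $H\,s$ cancels in the symmetric second difference, so no bound on $H(v_0)$ is needed. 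With such a uniform barrier your concluding semiconcavity step is sound, and your transfer of competitors into $M$ via the pointed convergence, with a diagonal choice of $t=t_j$ to hit the exact volume, is fine as written.
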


\begin{pf} See Corollary 1 in \cite{NardAJM}.
\end{pf}

The following theorem is stated and proved in \cite{MorgJoh} (Theorem 3.4-3.5) in case of a compact ambient manifold but, as was pointed out to the authors by C. Rosales, the same proof holds for manifolds which are merely complete. 
\begin{pro}\label{prop:MJBishop}
Let $(M,g)$ be a smooth, complete, connected $n$-dimensional Riemannian
manifold and assume the following lower bound on the  Ricci curvature:
$$\Ric_{(M,g)}(.,.)\geq (n-1)k_0 g(.,.)\quad k_0\in \R.$$
 For a given volume $v \in ]0,V(M)[$, let $\Omega\subset M$ be an isoperimetric region of volume $v$ and perimeter $\P_M(\Omega)$. Then 
 $$\P_M(\Omega)\leq \P_0(B_v)$$ 
 where $\P_0(B_v)$ is the perimeter of a ball $B_v$ of volume $v$ in the simply connected space form $M_0$ of constant
curvature $k_0$. Suppose further that for some volume $v_0$, $I_M(v_0)=I_{M_0}(v_0)$. Then $M$ has constant sectional curvature $k_0$ and all metric balls of volume $v_0$ contained in $M$ are isometric to $B_{M_0}(v_0)$, the metric ball of volume $v_0$ contained in  $M_0$.
\end{pro}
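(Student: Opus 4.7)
The plan is to compare the isoperimetric region $\Omega$ with a geodesic ball in $M$ of the same volume $v$ centered at an arbitrary point, and then to dominate the perimeter of this geodesic ball by the perimeter of a ball of the same volume in the simply connected model space $M_0$ via the Bishop--Gromov comparison theorem.

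Fix any $p\in M$. By continuity of $r\mapsto V(B(p,r))$ and the hypothesis $v<V(M)$, there exists $r>0$ with $V(B(p,r))=v$. Being a sublevel set of the $1$-Lipschitz distance function, $B(p,r)$ has finite perimeter bounded by $\mathcal{H}^{n-1}(\partial B(p,r))=:A(r)$. Minimality of $\Omega$ among sets of volume $v$ then gives
\begin{equation*}
\P_M(\Omega)\leq \P(B(p,r))\leq A(r).
\end{equation*}

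It remains to establish $A(r)\leq \P_0(B_v)$. Write $V(s):=V(B(p,s))$ and let $V_0,A_0$ denote the corresponding model quantities; choose $\bar r$ with $V_0(\bar r)=v$. The Bishop--Gromov theorem, available because $\Ric_g\geq (n-1)k_0 g$, asserts that $\phi(s):=V(s)/V_0(s)$ is non-increasing on $(0,\infty)$ with $\lim_{s\to 0^+}\phi(s)=1$. Computing $\phi'$ and imposing $\phi'\leq 0$ yields the pointwise inequality $A(s)/A_0(s)\leq V(s)/V_0(s)$. On the other hand $V(r)\leq V_0(r)$ forces $\bar r\leq r$, and a direct one-variable computation on the generalized sine function $s_{k_0}$ shows that in the model the ratio $A_0/V_0$ is non-increasing for $k_0\leq 0$, that is $A_0(r)/A_0(\bar r)\leq V_0(r)/V_0(\bar r)$. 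Combining these ingredients one obtains
\begin{equation*}
\frac{A(r)}{A_0(\bar r)}=\frac{A(r)}{A_0(r)}\cdot\frac{A_0(r)}{A_0(\bar r)}\leq \frac{V(r)}{V_0(r)}\cdot \frac{V_0(r)}{V_0(\bar r)}=\frac{V(r)}{V_0(\bar r)}=1,
\end{equation*}
hence $A(r)\leq A_0(\bar r)=\P_0(B_v)$ and the main inequality $\P_M(\Omega)\leq \P_0(B_v)$ follows.

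For the rigidity statement, suppose $I_M(v_0)=I_{M_0}(v_0)$ and let $\Omega$ be isoperimetric of volume $v_0$. All inequalities in the chain above become equalities for every $p\in M$ and the corresponding $r$, so in particular $\phi(r)=V(B(p,r))/V_0(B(r))=1$. The rigidity case of Bishop's volume comparison theorem then forces $B(p,r)$ to be isometric to the model ball of volume $v_0$ and the sectional curvatures of $M$ on $B(p,r)$ to be identically equal to $k_0$; since $p$ is arbitrary, $M$ has constant sectional curvature $k_0$ everywhere and every metric ball of volume $v_0$ in $M$ is isometric to $B_{M_0}(v_0)$. I anticipate the main subtlety to lie in the ``mixed'' step $A(r)\leq A_0(\bar r)$, which requires combining Bishop--Gromov monotonicity (at the same radius) with the model-space monotonicity of $A_0/V_0$; cut locus issues are handled by the classical convention of truncating Jacobi fields past cut points, under which the Bishop--Gromov inequality remains globally valid.
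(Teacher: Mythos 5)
Your proposal is correct in substance and follows the same overall skeleton as the paper---compare the isoperimetric region with a metric ball $B(p,r)\subset M$ of the same volume, then compare that ball with the model ball, and in the equality case propagate the rigidity of the ball comparison over all of $M$---but it differs in how the crucial ball comparison $\P_M(B(p,r))\leq \P_0(B_v)$ and its rigidity are obtained. The paper simply invokes Theorem 3.5 of Morgan--Johnson \cite{MorgJoh} (stated there for compact $M$), observing that its proof uses only the geometry of $M$ inside the ball and hence applies verbatim to complete non-compact manifolds; equality there directly gives the isometry of the metric ball with the model ball, and covering $M$ by balls of volume $v_0$ yields constant curvature. You instead re-prove that lemma from scratch via Bishop--Gromov: monotonicity of $V/V_0$ gives $A/A_0\leq V/V_0$, the model monotonicity of $A_0/V_0$ converts between the two radii $r$ and $\bar r$, and the equality case is reduced to $V(B(p,r))=V_0(r)$ and settled by the rigidity case of Bishop's volume comparison. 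This buys self-containedness, at the price of the measure-theoretic care you acknowledge (identifying $\mathcal{H}^{n-1}(\partial B(p,r))$ with the truncated Jacobian area at every radius, and $V'=A$ only almost everywhere), all of which is standard. Three small points to tighten: (i) your monotonicity of $A_0/V_0$ is asserted only for $k_0\leq 0$, while the proposition is stated for arbitrary $k_0\in\R$ (the same one-variable computation works for $k_0>0$, with Bonnet--Myers keeping $r$ below the model diameter; the paper only applies the proposition with $k_0\leq 0$); (ii) in the rigidity part you should not introduce an isoperimetric region of volume $v_0$---its existence is neither assumed nor needed, since $I_M(v_0)\leq \P(B(p,r))$ already closes the chain of inequalities for every $p$; (iii) the step from equality in the chain to $\phi(r)=1$ deserves a sentence: equality in the model factor together with strict monotonicity of $A_0/V_0$ forces $r=\bar r$, hence $V(r)=V_0(\bar r)=V_0(r)$, and then monotonicity of $\phi$ with $\phi(0^+)=1$ gives the volume rigidity on all of $B(p,r)$.
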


\begin{pf} Fix a volume $v_0\in ]0,V(M)[$ and take any metric ball $B_M(v_0)\subset M$ of volume $v_0$. Since the proof of Theorem 3.5 in \cite{MorgJoh} (stated for compact $M$) relies only on the part of $M$ \emph{inside} the metric  ball, the same argument holds for complete (possibly non-compact) $M$. It follows that 
\begin{equation}\label{eq:AMAM0}
\P_M(B_M(v_0))\leq \P_0(B_{M_0}(v_0))
\end{equation}
where $B_{M_0}(v_0)$ is the ball of volume $v_0$ in the space form of constant sectional curvature $k_0$, moreover if equality holds then $B_M(v_0)$ is isometric to $B_{M_0}(v_0)$. 

Since $B_M(v_0)$ is a competitor in $M$ between regions of volume $v_0$ while $B_{M_0}(v_0)$ is minimizer in $M_0$, we have 
$$I_M(v_0)\leq \P_M(B_M(v_0))\leq \P_0(B_{M_0}(v_0))= I_{M_0}(v_0).$$
If $I_M(v_0)=I_{M_0}(v_0)$ then all metric balls of volume $v_0$ contained in $M$ are isometric to $B_{M_0}(v_0)$. Covering of $M$ with metric balls of volume $v_0$, we conclude that $M$ has constant sectional curvature $k_0$.  
\end{pf}

Now, using geometric differential inequalities, we are going to prove two useful  properties of the isoperimetric profile of a manifold with $C^{2,\alpha}$-bounded geometry. First recall that given a function $f:\R \to\R$, one denotes 
\begin{equation}\label{def:D2f}
\overline{D^2 f}(x_0) := \limsup_{h\to0^+} \frac{f (x_0+h)+f (x_0-h)-2f (x_0)}{h^2}, \quad \forall x_0\in \R.
\end{equation}

The following theorem for compact manifolds is due to Bayle (see \cite{BayTh} Theorem 2.2.1).

\begin{thm}\label{thm:DiffIneRic}
Let $(M^n,g)$ be a complete $n$-dimensional Riemannian manifold,  of $C^{2,\alpha}$-bounded geometry with $n\geq 2$. Let us assume that 
$$\Ric_g\geq (n-1)k_0\, g, \quad k_0 \in \R .$$
Then the normalized isoperimetric profile $Y_{(M,g)}:=I_M^{\frac{n}{n-1}}$ satisfies the following second order differential inequality
\begin{equation}\label{eq:diffIne}
\forall v>0\quad \overline{D^2Y_{(M,g)}} (v)\leq -n k_0\; Y_{(M,g)}(v)^{\frac{2-n}{n}},
\end{equation} 
with equality in the case of the simply connected space form of constant sectional curvature $k_0$; moreover if
equality holds for a certain $v_0$, then  all the isoperimetric regions with volume $v_0$ have totally umbilic boundary along which the Ricci curvature, evaluated on unit normal directions, equals $(n-1)k_0$. 

\end{thm}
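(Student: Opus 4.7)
The strategy is to transport Bayle's compact-case argument to our non-compact setting by working with a generalized isoperimetric minimizer produced by Theorem~\ref{thm:genEx}, then translating the resulting bound on $I_M''$ into the differential inequality for $Y_{(M,g)}$.

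Fix $v_0>0$. By Theorem~\ref{thm:genEx} and Remark~\ref{rem:GenEx}, the $C^{2,\alpha}$-bounded geometry hypothesis yields pointed $C^{3,\alpha}$ limit manifolds $(M_{i,\infty},g_{i,\infty},p_{i,\infty})$ for $i=1,\dots,N$, with $C^{2,\alpha}$ metrics, and a finite-perimeter set $D=\mathring{\bigcup}_i D_{i,\infty}$ satisfying $V(D)=v_0$ and $\P(D)=I_M(v_0)$. The $C^{2,\alpha}$-convergence of the metric tensors along the diffeomorphisms $F_{i,R}$ passes the Ricci lower bound to each limit, so $\Ric_{g_{i,\infty}}\geq (n-1)k_0\,g_{i,\infty}$. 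By Proposition~\ref{prop:Reg}, each boundary $\Sigma_i:=\partial D_{i,\infty}$ is a $C^{2,\alpha}$ constant mean curvature hypersurface away from a singular set of Hausdorff dimension at most $n-8$; a first variation argument redistributing an infinitesimal volume between components forces all the $\Sigma_i$ to share one common mean curvature~$H$.

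Now consider the unit-speed outward normal flow $D_t$ of $D$ (defined on the regular part of $\partial D$, which carries full $\mathcal{H}^{n-1}$-measure), and set $v(t):=V(D_t)$, $a(t):=\P(D_t)$. The classical first- and second-variation formulas for CMC hypersurfaces in a Riemannian ambient manifold give
\begin{align*}
v'(0)&=A:=\P(D), & v''(0)&=HA,\\
a'(0)&=HA, & a''(0)&=\int_{\partial D}\bigl(H^2-|II|^2-\Ric(\nu,\nu)\bigr)\,d\mathcal{H}^{n-1}.
\end{align*}
Inverting $v(t)$ near $t=0$ (possible since $v'(0)=A>0$) and defining $\psi(v):=a(t(v))$, the chain rule gives $\psi'(v_0)=H$ and $\psi''(v_0)=a''(0)/A^{2}-H^{2}/A$. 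To see that $\psi\geq I_M$ on a neighborhood of $v_0$ with equality at $v_0$, transplant each $D_{i,\infty}(t)$ back to $M$ via the embeddings $F_{i,j}$ of Definition~\ref{def:CmaConv} for $j$ large, placing the $N$ pieces far apart in $M$ (possible since $\mathrm{dist}(p_{h,j},p_{l,j})\to\infty$); the $C^{2,\alpha}$-convergence of the metrics makes both volume and perimeter errors vanish, and a small final volume adjustment, justified by the continuity of $I_M$ proved in Proposition~\ref{pro:ContI}, yields bona fide competitors in $M$, so $I_M(v)\leq\psi(v)$. Hence $\overline{D^2 I_M}(v_0)\leq\psi''(v_0)$.

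By Cauchy--Schwarz, $|II|^{2}\geq H^{2}/(n-1)$ pointwise on $\partial D$, with equality iff $\partial D$ is totally umbilic, and by hypothesis $\Ric(\nu,\nu)\geq (n-1)k_0$ pointwise. Substituting into $a''(0)$ and then into $\psi''(v_0)$ yields
\begin{equation*}
\overline{D^2 I_M}(v_0)\leq -\frac{H^{2}}{(n-1)\,I_M(v_0)}-\frac{(n-1)k_0}{I_M(v_0)}.
\end{equation*}
The analogous comparison with the test function $\Psi:=\psi^{n/(n-1)}\geq Y_{(M,g)}$ (with equality at $v_0$), expanding $\Psi''(v_0)$ by the chain rule and inserting the bound just obtained, makes the two $H^{2}$-contributions cancel exactly and delivers precisely \eqref{eq:diffIne}. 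In the simply connected space form $\mathbb{M}^{n}_{k_0}$ geodesic spheres are isoperimetric, totally umbilic, and satisfy $\Ric(\nu,\nu)=(n-1)k_0$, so every inequality above is an equality; conversely, equality at some $v_0$ forces pointwise saturation of both Cauchy--Schwarz on $|II|^{2}$ and the Ricci bound on the boundary of every isoperimetric region of volume $v_0$, proving the rigidity statement. The variational calculus itself is classical once a smooth minimizer is in hand; the main obstacle is the non-compact passage, namely replacing a possibly non-existent minimizer in $M$ by a $C^{2,\alpha}$-regular configuration on $\mathring{\bigcup}M_{i,\infty}$ and transplanting its deformations back to $M$ so that $\psi$ genuinely dominates $I_M$, which is exactly where the $C^{2,\alpha}$-bounded geometry hypothesis and Proposition~\ref{pro:ContI} enter in an essential way.
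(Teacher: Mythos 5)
Your proposal is correct and follows essentially the same route as the paper: apply the generalized existence Theorem~\ref{thm:genEx} to obtain a regular minimizer in the disjoint union with the limit manifolds, note that the Ricci lower bound passes to the $C^{2,\alpha}$ limits, and then run Bayle's normal-flow/second-variation computation (which the paper simply cites from Bayle's thesis and \cite{MorgJoh}, and which you spell out, including the transplantation of competitors back to $M$ via the embeddings and the continuity of $I_M$ from Proposition~\ref{pro:ContI}). No substantive divergence from the paper's argument.
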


\begin{pf} The proof of Bayle relies on the Ricci curvature lower bound and on the fact that for every $v\in ]0, V(M)[$ the isoperimetric profile $I_M(v)$ is achieved by a region with the regularity stated in Proposition \ref{prop:Reg}. Using the Generalized Existence Theorem \ref{thm:genEx}, for every $v>0$ there exists an isoperimetric region $D=D_1\cup_{i=2}^N D_{\infty,i}$ where $D_1\subset M, D_{\infty,i}\subset M_{\infty,i}$ are isoperimetric regions in their own manifolds; moreover $D$ is an isoperimetric region in the manifold given by the disjoint union $M \mathring{\cup}_{i=2}^\infty M_{\infty,i}$ for its own volume. Therefore, recalling Remark \ref{rem:reg} and observing that the lower bound on the Ricci curvature passes to the $C^{2,\alpha}$-limit manifolds $(M_{\infty,i}, g_{\infty,i})$,  the argument of Bayle  (see \cite{BayTh} Theorem 2.2.1 and the computations on pages 45-51; see also \cite{MorgJoh} Proposition 3.3) can be repeated, bringing the desired conclusion.
\end{pf}

\begin{cor}\label{cor:RicPos}
Let $(M,g)$ be complete, non-negatively Ricci curved, $\Ric_g \geq 0$, and of $C^{2,\alpha}$-bounded geometry. Then the isoperimetric profile function $I_M:[0,V(M)[\to \R$ is strictly concave.  In particular, since $I_M(0)=0$, $I_M$ is strictly subadditive, and this implies that every isoperimetric region is indecomposable. 
\end{cor}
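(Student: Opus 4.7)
The plan is to invoke the differential inequality of Theorem \ref{thm:DiffIneRic} with $k_0=0$, convert it into concavity of the normalized profile, upgrade to strict concavity of $I_M$, and finally deduce strict subadditivity and indecomposability.

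First, Theorem \ref{thm:DiffIneRic} specialized to $k_0=0$ yields
\begin{equation*}
\overline{D^2 Y_{(M,g)}}(v)\leq 0,\qquad v\in(0,V(M)),
\end{equation*}
where $Y_{(M,g)}:=I_M^{n/(n-1)}$. Together with the continuity of $I_M$ provided by Proposition \ref{pro:ContI}, the standard fact that a pointwise non-positive upper second difference quotient on an interval forces concavity gives that $Y_{(M,g)}$ is concave on $[0,V(M))$.

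Second, I would upgrade this to strict concavity of $I_M=\phi\circ Y_{(M,g)}$, where $\phi(t):=t^{(n-1)/n}$ is strictly concave and strictly increasing on $(0,\infty)$. Differentiating at points of twice differentiability (a.e.\ by Proposition \ref{pro:ContI}),
\begin{equation*}
I_M''=\frac{n-1}{n^{2}}\,Y_{(M,g)}^{-(n+1)/n}\Bigl[n\,Y_{(M,g)}\,Y_{(M,g)}''-(Y_{(M,g)}')^{2}\Bigr],
\end{equation*}
which is strictly negative whenever $Y_{(M,g)}'\neq 0$ (since $Y_{(M,g)}''\leq 0$ and $Y_{(M,g)}>0$). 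The main obstacle is to exclude the degenerate case in which $Y_{(M,g)}$ is constant on some subinterval $[a,b]$: this would force $\overline{D^2 Y_{(M,g)}}\equiv 0$ on $[a,b]$ and, via the equality clause of Theorem \ref{thm:DiffIneRic}, impose rigid behaviour (totally umbilic boundaries with vanishing normal Ricci curvature) on the isoperimetric regions of the corresponding volumes. Combined with the small-volume asymptotics of $I_M$, which match the Euclidean profile and hence force $Y_{(M,g)}$ to be strictly increasing in a right neighbourhood of $0$, concavity then propagates strict monotonicity to the whole interval, giving $Y_{(M,g)}'>0$ on $(0,V(M))$ and therefore strict concavity of $I_M$.

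Third, strict concavity of $I_M$ together with $I_M(0)=0$ immediately yields strict subadditivity: for $v_1,v_2>0$ with $v_1+v_2<V(M)$, writing $v_1=\tfrac{v_1}{v_1+v_2}(v_1+v_2)+\tfrac{v_2}{v_1+v_2}\cdot 0$ and applying strict concavity together with $I_M(0)=0$ gives $I_M(v_1)>\tfrac{v_1}{v_1+v_2}I_M(v_1+v_2)$, and symmetrically for $v_2$; summing produces $I_M(v_1)+I_M(v_2)>I_M(v_1+v_2)$. Finally, for indecomposability I would argue by contradiction: if an isoperimetric region $\Omega$ of volume $v$ splits as $\Omega=\Omega_1\sqcup\Omega_2$ with $V(\Omega_i)>0$ and $\P(\Omega)=\P(\Omega_1)+\P(\Omega_2)$, then
\begin{equation*}
I_M(v)=\P(\Omega_1)+\P(\Omega_2)\geq I_M(V(\Omega_1))+I_M(V(\Omega_2))>I_M(v),
\end{equation*}
a contradiction.
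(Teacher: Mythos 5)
Your overall route coincides with the paper's: specialize Theorem \ref{thm:DiffIneRic} to $k_0=0$ to get $\overline{D^2 Y_{(M,g)}}\leq 0$, hence concavity of $Y_{(M,g)}$ (the paper cites Bayle, Proposition B.2.1, for exactly this), pass to $I_M=Y_{(M,g)}^{\frac{n-1}{n}}$ to get strict concavity, deduce strict subadditivity from strict concavity together with $I_M(0)=0$, and conclude indecomposability by the identical contradiction $I_M(v)=\P(\Omega_1)+\P(\Omega_2)\geq I_M(v_1)+I_M(v_2)>I_M(v)$. Your third step and the indecomposability argument are fine and are literally the paper's.

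The divergence, and the genuine gap, is in your second step. The paper obtains strict concavity of $I_M$ directly from the strict concavity of the outer function $t\mapsto t^{\frac{n-1}{n}}$ applied to the concave $Y_{(M,g)}$; strictness can only fail on an interval where $Y_{(M,g)}$ is constant, a case the paper does not dwell on. You correctly single out this degenerate case as the obstacle, but your way of excluding it does not work. The assertion that strict monotonicity of $Y_{(M,g)}$ near $0$ ``propagates'' by concavity to the whole interval is false: a concave function can be strictly increasing near $0$ and constant afterwards (think of $v\mapsto\min(v,1)$), and an eventually constant profile is precisely the scenario you would need to rule out. The appeal to the equality clause of Theorem \ref{thm:DiffIneRic} is likewise never turned into a contradiction: on a constancy interval it would only give totally umbilic boundaries with $\Ric(\nu,\nu)=0$ along them, which by itself contradicts nothing. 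Finally, the a.e.\ computation of $I_M''$ cannot substitute for this, since $I_M$ is only twice differentiable almost everywhere and a sign condition a.e.\ does not yield strict concavity without first controlling the exceptional set or the affine/constant pieces. So either argue as the paper does, invoking the strict concavity of $t\mapsto t^{\frac{n-1}{n}}$ (and, if you want full rigor, address the constancy case honestly rather than via the propagation claim), or replace that part with a correct argument; as written, the middle step of your proposal would fail.
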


\begin{pf}
Since in this case $k_0=0$, by the differential inequality \eqref{eq:diffIne} we get
$$\forall v>0\quad \overline{D^2Y_{(M,g)}}(v) \leq 0,$$
so the function $Y_M$ is concave (see \cite{BayTh} Proposition B.2.1 pag.181). Now observe that $I_M=Y_{(M,g)}^{\frac{n-1}{n}}$; since the exponent is $\frac{n-1}{n}<1$, it follows that $I_M$ is strictly concave. Of course a continuous strictly concave function on $]0,\infty[$ which is null at $0$ is strictly subadditive (for the simple proof see for example \cite{BayTh}, Lemma B.1.4). Now let  $\Omega_v$ be an isoperimetric region in volume $v>0$. If by contradiction $\Omega_v=\Omega^1 \cup \Omega^2$ is a decomposition of $\Omega$, say $0<v_1=V(\Omega_1)$ and $0<v_2=V(\Omega_2)$, then by  the  assumed subadditivity of the isoperimetric profile we reach the contradiction
$$I_M(v)=\P(\Omega_v)=  \P(\Omega_1)+\P(\Omega_2)\geq I_M(v_1)+I_M(v_2)>I_M(v).$$ 
\end{pf}

\section{Existence and properties of isoperimetric regions}\label{Sec:Existence}

\subsection {Proof of Theorem \ref{thm:ExMinMain}}

Recall that the isoperimetric regions in the $n$-dimensional simply connected space form $\mathbb{M}_{{k}_0}^n$ of constant sectional curvature $k_0$ are metric balls (no matter where the center is). Therefore it is clear that, applying the generalized existence Theorem \ref{thm:genEx} (recall also Remark \ref{rem:GenEx}) to a manifold $(M^n,g)$ satisfying the assumptions of Theorem \ref{thm:ExMinMain}, there is at most one component of the generalized isoperimetric region $D$ placed in the manifold $\mathbb{M}_{{k}_0}^n$ at infinity. More precisely, fixing a positive volume $v>0$ and considering $D$ as a generalized isoperimetric region for the volume $v$ given in $8$ of Theorem \ref{thm:genEx}, we have that  
\be\label{eq:main1}
D=D_1 \cup D_{\infty} \quad \text{ where } D_1 \subset M \text{ and } D_{\infty}\subset \mathbb{M}_{{k}_0}^n,
\ee
with $v=v_1+v_\infty$ where $v_1=V_M(D_1)$ and $v_\infty:=V_{\mathbb{M}_{{k}_0}^n}(D_\infty)$.
Since both  $D_{\infty}\subset \mathbb{M}_{{k}_0}^n$ and $D_1\subset M$ are isoperimetric regions for their own volume, we have that 
\begin{eqnarray}
 D_{\infty}&\subset& \mathbb{M}_{{k}_0}^n\quad  \text {is a metric ball: } D_\infty=B_{\mathbb{M}_{{k}_0}^n}(v_\infty), \label{eq:Dinfty} \\
 D_1&\subset& M \quad \;\text{ is bounded} \label{eq:D_1},
\end{eqnarray}
where $B_{\mathbb{M}_{{k}_0}^n}(v_\infty)$ is a metric ball in $\mathbb{M}_{{k}_0}^n$ of volume $v_\infty$ and  the second statement is ensured by Theorem \ref{thm:IsopRegbdd}.

If  $D_\infty = \emptyset$ the conclusion follows, so we can assume that $D_\infty \neq \emptyset$ and $v_\infty:=V_{\mathbb{M}_{{k}_0}^n}(D_\infty)>0$. 
Let us consider a metric ball  $B_M(v_\infty)\subset M$ of volume $v_\infty$ placed at positive distance from $D_1$ (this is possible thanks to \eqref{eq:D_1} and the assumed asymptotic behaviour of $(M,g)$). By formula \eqref{eq:AMAM0} in the proof of Proposition \ref{prop:MJBishop}, we have that 
\be \label{eq:BMDinf}
\P_M(B_M(v_\infty))\leq \P_{\mathbb{M}_{{k}_0}^n}(D_\infty).
\ee
Therefore if we move all the volume $v_\infty$ which stays in the manifold at infinity $\mathbb{M}_{{k}_0}^n$  in any metric ball contained in the original manifold  $M$, we do not increase the perimeter and
$$I_M(v)=\P_M(D_1)+ \P_{\mathbb{M}_{{k}_0}^n}(D_\infty)\geq \P_M(D_1)+ \P_M(B_M(v_\infty))=\P_M( D_1 \cup  B_M(v_\infty)),$$
where we used 7 of Theorem \ref{thm:genEx} for the first equality and the fact that $D_1$ and $B_M(v_\infty)$ are at positive distance for the final equality.

Since $D_1$ and $B_M(v_\infty)$ are disjoint, then $V(D_1 \cup  B_M(v_\infty))=v_1+v_\infty=v$ and we conclude that $D_1\cup B_M(v_\infty)$ is an isoperimetric region in $M$ for the volume $v$. Since $v>0$ was arbitrary the theorem is proved.

The indecomposability of the isoperimetric regions in case $\Ric_g\geq 0$ is ensured by Corollary \ref{cor:RicPos}.
\hfill$\Box$ 

\subsection{The case $\Ric_g\geq 0$}

Since the asymptotically  locally euclidean manifolds with non-negative Ricci tensor are particularly interesting for the applications (see Section \ref{Sec:Examples}), in this subsection we give an alternative proof of Theorem \ref{thm:ExMinMain} in this case. Moreover, if $\Ric_g\geq 0$, it is possible to do a finer analysis on the minimizing sequences: roughly speaking  either they converge to an isoperimetric region or they diverge, but they cannot split into a converging part and a diverging part. 

\subsubsection{An alternative  proof of Theorem \ref{thm:ExMinMain} in the case $\Ric_g\geq 0$} \label{SS:thmExMin}
Let $(M^n,g)$ satisfy the hypothesis of Theorem \ref{thm:ExMinMain} with $k_0=0$, so that $\Ric_g\geq 0$ and the manifold is $C^0$-locally asymptotic to $\Rn$. For a fixed $v>0$,  we want to show that there exists an isoperimetric region in $M$ of volume $v$.  Theorem \ref{thm:genEx} (recall also Remark \ref{rem:GenEx}) ensures the existence of a generalized isoperimetric region 
$$D=D_1 \cup D_{\infty} \quad \text{where } D_1 \subset M \text{ and } D_{\infty}\subset \Rn$$
are such that $V_M(D_1)=v_1$, $V_{\Rn}(D_\infty)=v_\infty$ with $v_1+v_\infty=v$ and $D_1$ (resp. $D_\infty$) is an isoperimetric region in $M$ (resp. in $\Rn$) for its own volume $v_1$ (resp. $v_\infty$).

The structure of the proof is the following: first we  show that $D$ is connected, so either $D=D_1$ or $D=D_{\infty}$, then we prove that it must be $D=D_1$. \\

\emph{STEP 1}: $D=D_1\subset M$ or $D=D_\infty\subset \Rn$.
\\Let us start assuming $dim(M)=n<8$, since in this case the proof is very short (later we will explain how to handle the general case). As $D$ is an isoperimetric domain in $M\cup \Rn$, its boundary is a smooth  stable CMC hypersurface of finite area. If by contradiction $D_1\neq \emptyset$ and $D_\infty \neq \emptyset$, then  $0<\P_M(D_1),  \P_{\Rn}(D_\infty) <\infty$ and there exist $c_1,c_\infty\in \R\backslash\{0\}$ such that
\begin{equation}\label{eq:c1c2}
c_1 \,\P_M(D_1)=c_\infty\, \P_{\Rn}(D_\infty). 
\end{equation}
Denote by $\nu_1$ and $\nu_{\infty}$ the outward pointing unit normal vectors to $\partial D_1$ and $\partial D_\infty$, and consider the variation of $D$ composed by varying $D_1$ in the direction $c_1 \nu_1$ and varying $D_\infty$ in the direction of $-c_\infty \nu_\infty$. Observe that \eqref{eq:c1c2} implies that this is an admissible variation (it has null mean value so it is volume preserving to first order). 
Since the first variation of the perimeter $\P$ of  $D$ with respect to null mean value deformations is null (recall that $\partial D$ is union of smooth hypersurfaces of constant mean curvature), it is interesting to compute the second variation of $\P$ in the specified direction. The standard expression of the second variation of the area  (see for example \cite{BDE}, Proposition 2.5) gives
\begin{equation}\label{eq:SVF}
\P''(D)=\P_M''( D_1)+\P_{\Rn}''(D_\infty)=-c_1^2\int_{\partial D_1} \left(\sigma_{1}^2+ \Ric_g(\nu_1,\nu_1)\right)-c_\infty^2 \int_{\partial D_\infty} \sigma_\infty^2, 
\end{equation}
where $\sigma_1$ (resp. $\sigma_\infty$) is the norm of the second fundamental form of $\partial D_1$ (resp. $\partial D_\infty$). Now observe that $\partial D_\infty$ is an $(n-1)$-dimensional Euclidean sphere of radius $r_\infty$, so 
$$\int_{\partial D_\infty} \sigma_\infty^2= (n-1) \omega_{n-1} r_\infty^{n-3}<0$$
where $\omega_{n-1}$ is the perimeter of unit sphere in $\Rn$. Since we are assuming that $\Ric_g\geq 0$, we can conclude that
\begin{equation}\label{eq:contr}
\P''(\partial D)\leq  -c_\infty^2 (n-1) \omega_{n-1} r_\infty^{n-3}<0, 
\end{equation}
which contradicts the stability of $\partial D$.

In the general case of dimension $n$ for the ambient manifold $M$, we can use a classical argument employing cutoff functions. This trick was attributed in \cite{BaPa} (Section 7)  to P. Berard, G. Besson, S. Gallot, proved in detail in \cite{BayTh} (Proposition A.0.5) and in \cite{MorRit}  (Lemma 3.1), and used for example in \cite{MorgJoh} in Section 2. The argument is as follows: If $\Omega\subset M$ is an isoperimetric region and with $\partial\Omega_r, \partial\Omega_s$ the regular and the singular part respectively, then by the aforementioned Proposition A.0.5 in \cite{BayTh} there exist a function $\phi_\epsilon:\partial\Omega\rightarrow [0,1]$ for each $\epsilon>0$ with the following properties:
\begin{itemize}
\item ${\phi_\epsilon}_{|\partial\Omega_r}\in C^{\infty}(\partial\Omega_r, [0,1])$, with $\spt(\phi_\epsilon)\subset\subset\partial\Omega_r$;
\item $\P(\Omega)-\epsilon\leq\int_{\partial\Omega} \phi_\epsilon d\mathcal{H}_{n-1}\leq \P(\Omega)$,
\item $\P(\Omega)-\epsilon\leq\int_{\partial\Omega} \phi_\epsilon^2 d\mathcal{H}_{n-1}\leq \P(\Omega)$,
\item $\frac{\int_{\partial\Omega}||\nabla\phi_\epsilon||^2d\mathcal{H}_{n-1}}{\int_{\partial\Omega}\phi_\epsilon^2d\mathcal{H}_{n-1}}\leq\epsilon$.
\end{itemize}
For small $\epsilon>0$, consider the standard expression of $\P''(D)$ with variation field $c_{1,\epsilon}\phi_\epsilon\nu_1-c_{\infty,\epsilon}\nu_\infty$, where $\phi_\epsilon$ is as before with $\Omega = D_1$ and $c_{1,\epsilon},c_{\infty,\epsilon}$ chosen in such a way that the variation has null mean value.  Letting $\epsilon\rightarrow 0$ in this expression gives again \eqref{eq:contr}, which contradicts the stability. 
\newline

\emph{An alternative proof of STEP 1}:
\\Since the enlarged manifold $M\cup \Rn$ has non-negative Ricci curvature and $C^0$-bounded geometry, by Corollary \ref{cor:RicPos} (notice that we asked $C^{2,\alpha}$-bounded geometry just to ensure that the manifolds at infinity were smooth enough to carry the regularity of isoperimetric regions and for ensuring that the lower bound on the Ricci tensor is preserved in the limit; both facts are clearly true if the limit manifolds are isometric to the Euclidean $n$-dimensional space) the isoperimetric regions are indecomposable, so either  $D=D_1 \subset  M$ or $D=D_\infty \subset \Rn $.
\newline 

\emph{STEP 2}: $D=D_1\subset M$.
\\By Step 1, either $D=D_1\subset  M$ or $D=D_\infty\subset \Rn$.  If $D=D_1$ we have finished, so we can assume $D=D_\infty$. Recalling that $V(D)=v$, items 5 and 7 of Theorem \ref{thm:genEx} yield that
\be \label{eq:IMIRn}
I_M(v)=\P_{M \cup \Rn}(D)=\P_{\Rn}(D_\infty)=I_{\Rn}(v).
\ee
Now, by  Proposition \ref{prop:MJBishop} all the metric balls $B_M(p_0,v)\subset M$ of volume $v$ are isometric to the standard round ball $B_{\Rn}(v)\subset \Rn$ of volume $v$, and in particular the area of the boundaries are equal.  We conclude
$$I_M(v)\leq P_M(B_M(p_0,v))=\P_{\Rn}(B_{\Rn}(v))=I_{\Rn}(v)=I_M(v)$$
where we used \eqref{eq:IMIRn} in the last equality. Therefore $I_M(v)= \P_M(B_M(p_0,v))$, i.e. $B_M(p_0,v)$ is an isoperimetric region in volume $v$ for every $p_0\in M$, and the theorem follows by the arbitrarity of $v>0$. 
\\We remark that in the latter case $M$ is locally isometric to $\Rn$.

\hfill$\Box$

\subsubsection{Finer properties of the minimizing sequences in the case $\Ric\geq 0$}
Given a Riemannian manifold $(M,g)$, we say that a sequence $\{\Omega_k\}_{k\in \N}$ of finite perimeter subsets of $M$ \emph{diverges} if for every compact subset $K\subset \subset M$, there exists $N \in \N$ such that 
$$\Omega_k \cap K= \emptyset \quad \forall k \geq N.$$
Let us recall the following fundamental theorem of Ritor\'e and Rosales (Theorem 2.1 in \cite{RitRosal})

\begin{thm}[Ritor\'e-Rosales '04]\label{thm:RR}
Let $(M^n,g)$ be a complete connected Riemannian $n$-manifold. For every minimizing sequence $\{\Omega_k\}_{k \in \N}$ of sets of volume $v$, there exists a finite perimeter set $\Omega\subset M$ and sequences of sets of finite perimeter $\{\Omega^c_k\}_{k \in \N}$, $\{\Omega^d_k\}_{k \in \N}$, with $\Omega_k=\Omega^c_k \cup \Omega^d_k$ and $\Omega^c_k \cap \Omega^d_k= \emptyset$, such that the following hold:
\begin{enumerate}
\item $V(\Omega)\leq v$, $\P(\Omega)\leq I_M(v)$,
\item $V(\Omega^c_k)+V(\Omega^d_k)=v$, $\lim_{k\to \infty}[\P(\Omega^c_k)+\P(\Omega^d_k)]=I_M(v)$,
\item the sequence $\{\Omega^d_k\}_{k\in \N}$ diverges,
\item there exists a finite perimeter set $\Omega\subset M$ such that, passing to a subsequence $\{k_j\}_{j \in \N}$, $\{\Omega^c_{k_j}\}_{j \in \N}$ converges to $\Omega$ in the sense of finite perimeter sets. In particular, $\lim_{j \to \infty} \P(\Omega^c_{k_j})=\P(\Omega)$ and $\lim_{k_j\to \infty} V(\Omega^c_{k'})=V(\Omega)$,
\item $\Omega$ is an isoperimetric region (possibly empty) for the volume it encloses. 
\end{enumerate}
\end{thm}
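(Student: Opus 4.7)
The plan is to extract a limit set $\Omega$ from the minimizing sequence via $BV$ compactness, split each $\Omega_k$ into a bounded part converging to $\Omega$ and a remainder diverging to infinity via a carefully chosen truncation by geodesic balls, and finally establish minimality of $\Omega$ by a contradiction argument.

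First, since $\{\P(\Omega_k)\}$ is bounded and $V(\Omega_k)=v$, the characteristic functions $\chi_{\Omega_k}$ are bounded in $BV_{loc}(M)$; $BV$ compactness yields a (not relabeled) subsequence with $\chi_{\Omega_k}\to\chi_\Omega$ in $L^1_{loc}(M)$ for some finite perimeter set $\Omega\subset M$. Lower semicontinuity of the perimeter and Fatou's lemma then give $\P(\Omega)\leq I_M(v)$ and $V(\Omega)\leq v$, which is item 1. Next, fix a base point $p_0\in M$; for each $T>0$ the coarea formula gives
\[
\int_T^{2T}\mathcal{H}^{n-1}\bigl(\Omega_k\cap\partial B(p_0,s)\bigr)\,ds\;\leq\;V(\Omega_k)\;=\;v,
\]
so there exists $R(T,k)\in[T,2T]$ with $\mathcal{H}^{n-1}(\Omega_k\cap\partial B(p_0,R(T,k)))\leq v/T$. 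Choosing $T_k\to\infty$ slowly by a diagonal procedure and setting $R_k:=R(T_k,k)$, one simultaneously arranges $R_k\to\infty$, $\mathcal{H}^{n-1}(\Omega_k\cap\partial B(p_0,R_k))\to 0$, and $\int_{B(p_0,R_k)}|\chi_{\Omega_k}-\chi_\Omega|\,dV\to 0$. Defining $\Omega_k^c:=\Omega_k\cap B(p_0,R_k)$ and $\Omega_k^d:=\Omega_k\setminus B(p_0,R_k)$, the elementary splitting estimate
\[
\P(\Omega_k^c)+\P(\Omega_k^d)\;\leq\;\P(\Omega_k)+2\,\mathcal{H}^{n-1}\bigl(\Omega_k\cap\partial B(p_0,R_k)\bigr)\;=\;I_M(v)+o(1),
\]
together with $V(\Omega_k^c)+V(\Omega_k^d)=v$ and the lower semicontinuity bound $\liminf_k\P(\Omega_k^c)\geq\P(\Omega)$, delivers items 2, 3, 4 and the finite-perimeter convergence $\Omega_k^c\to\Omega$.

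For item 5 I would argue by contradiction. Set $v_0:=V(\Omega)$ and assume $\P(\Omega)>I_M(v_0)$. Choose bounded $\tilde\Omega_j$ with $V(\tilde\Omega_j)=v_0$ and $\P(\tilde\Omega_j)\to I_M(v_0)$. Since $\Omega_k^d$ diverges, for $k$ large enough (depending on $j$) the set $\Omega_k^d$ is disjoint from $\tilde\Omega_j$; pick $j=j(k)\to\infty$ slowly. The set $\tilde\Omega_{j(k)}\cup\Omega_k^d$ has volume $v_0+V(\Omega_k^d)=v+o(1)$, which can be adjusted to exactly $v$ by adding or removing a small ball at cost $o(1)$ in perimeter. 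Its perimeter then tends to at most $I_M(v_0)+\liminf_k\P(\Omega_k^d)$. On the other hand, the splitting estimate above combined with superadditivity of $\liminf$ yields $\P(\Omega)+\liminf_k\P(\Omega_k^d)\leq I_M(v)$. Substituting $I_M(v_0)<\P(\Omega)$ would produce a competitor for volume $v$ with perimeter strictly below $I_M(v)$, a contradiction.

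The main obstacle I expect is the simultaneous diagonal choice of the truncation radii $R_k$: one must force $R_k\to\infty$ (so the diverging part genuinely escapes every compact set), keep the boundary area $\mathcal{H}^{n-1}(\Omega_k\cap\partial B(p_0,R_k))$ infinitesimal (so no spurious perimeter is created at the cut), and capture enough of $\Omega$ inside $B(p_0,R_k)$ to preserve the $L^1$ convergence. The coarea formula supplies many "good" radii in a Fubini sense for each fixed $k$, but interlacing these choices across $k$ so that all three properties hold at once is the delicate technical point of the argument.
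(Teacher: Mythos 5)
The paper itself does not prove this statement---it is quoted from Theorem 2.1 of \cite{RitRosal}---and your proposal reproduces essentially the standard truncation-by-good-radii scheme of that original argument: the $BV_{loc}$ extraction, the coarea choice of cutting radii with small slice area, the diagonal choice guaranteeing $\int_{B(p_0,R_k)}|\chi_{\Omega_k}-\chi_\Omega|\,dV\to 0$ (which is indeed exactly what is needed to get $V(\Omega^c_{k_j})\to V(\Omega)$, not just $L^1_{loc}$ convergence on fixed compacta), and the replacement-plus-small-ball-volume-fix comparison for item 5 are all sound and are the right ingredients.

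There is, however, one genuine gap, in item 4: you assert that the splitting estimate together with the lower semicontinuity bound $\liminf_k\P(\Omega^c_k)\geq\P(\Omega)$ ``delivers the finite-perimeter convergence $\Omega^c_k\to\Omega$''. Convergence in the sense of finite perimeter requires the full limit $\lim_j\P(\Omega^c_{k_j})=\P(\Omega)$, and semicontinuity gives only one inequality; nothing you wrote excludes $\limsup_j\P(\Omega^c_{k_j})>\P(\Omega)$, i.e.\ a persistent perimeter excess in the convergent part. You also cannot appeal to continuity of $I_M$ to control $\P(\Omega^d_k)$ from below, since for a general complete manifold this is unavailable (in the paper it requires $C^{2,\alpha}$-bounded geometry, Proposition \ref{pro:ContI}). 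The repair is the very replacement argument you use for item 5: if along a subsequence $\P(\Omega^c_{k_j})\to L>\P(\Omega)$, then $\P(\Omega^d_{k_j})\to I_M(v)-L$; replace $\Omega^c_{k_j}$ by a bounded truncation $\Omega\cap B(p_0,R)$ (coarea provides radii with $\mathcal{H}^{n-1}(\Omega\cap\partial B(p_0,R))$ arbitrarily small, because $V(\Omega)<\infty$), take its disjoint union with $\Omega^d_{k_j}$, and perform the small-ball volume adjustment; this produces sets of volume $v$ with perimeter tending to at most $\P(\Omega)+(I_M(v)-L)<I_M(v)$, a contradiction. With that addition---and the trivial observation that $\P(\Omega^c_k)+\P(\Omega^d_k)\geq\P(\Omega_k)$, which supplies the lower bound needed for the limit claimed in item 2---your proof is complete and follows the same route as the cited source.
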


The aim of the present section is to prove the following theorem, which says that if $\Ric_g\geq 0$ then for every $v>0$ any minimizing sequence $\{\Omega_k\}_{k\in \N}$ for the volume $v$ cannot split into a convergent part $\{\Omega^c_k\}_{k\in \N}$ and a divergent part $\{\Omega^d_k\}_{k \in \N}$ such that $\liminf_k V(\Omega^c_k)>0$ and $\liminf_k V(\Omega^d_k)>0$, or in other words, any minimizing sequence for the volume $v>0$ either converges to an isoperimetric region of volume $v$ or diverges, up to a part whose volume converges to zero and up to subsequences. 

\begin{thm}\label{thm:Ric>0}
Let $(M^n,g)$ be a complete connected Riemannian $n$-manifold having $C^{2,\alpha}$-bounded geometry and satisfying $\Ric_g\geq 0$. Fix $v\in ]0,V(M)[$ and consider any minimizing sequence $\{\Omega_k\}_{k\in \N}$ for the volume $v$. Then there exist sequences of sets of finite perimeter $\{\Omega^1_k\}_{k \in \N}$, $\{\Omega^2_k\}_{k \in \N}$ and a subsequence $\{k_j\}_{j\in \N}$ such that
$$\Omega_k=\Omega^1_k \cup \Omega^2_k, \;\Omega^1_k \cap \Omega^2_k= \emptyset\;\text{for all }k \in \N \text{, and } \quad  \lim_{j\to \infty} V(\Omega^1_{k_j})=v,\; \lim_{j\to \infty} V(\Omega^2_{k_j})=0.$$
Moreover, either $\{\Omega^1_{k_j}\}_{j\in \N}$ diverges or there exists an isoperimetric region $\Omega\subset M$ for the volume $v$ such that $\{\Omega^1_{k_j}\}_{j \in \N}$ converges to $\Omega$ in the sense of finite perimeter.
\end{thm}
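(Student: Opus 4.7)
\emph{Proof proposal.} The plan is to combine the Ritor\'e--Rosales decomposition (Theorem \ref{thm:RR}) with the strict subadditivity of the isoperimetric profile provided by Corollary \ref{cor:RicPos}, ruling out any non-trivial splitting of the minimizing sequence into two parts each of positive limiting volume.

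First, I apply Theorem \ref{thm:RR} to the given minimizing sequence $\{\Omega_k\}_{k\in\N}$ and obtain, up to a subsequence that I still denote by $k$, a decomposition $\Omega_k = \Omega_k^c \cup \Omega_k^d$ (disjoint union) with $\Omega_k^d$ diverging, $\Omega_k^c$ converging in the finite perimeter sense to some isoperimetric region $\Omega \subset M$ for its own volume, and
\[
\lim_{k\to\infty} V(\Omega_k^c) = V(\Omega) =: v_c, \quad \lim_{k\to\infty} V(\Omega_k^d) = v - v_c =: v_d, \quad \lim_{k\to\infty}\bigl[\P(\Omega_k^c)+\P(\Omega_k^d)\bigr] = I_M(v).
\]
Since $\Omega$ is isoperimetric for its own volume, $\P(\Omega) = I_M(v_c)$. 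For the divergent part, the trivial bound $\P(\Omega_k^d)\geq I_M(V(\Omega_k^d))$ combined with the continuity of $I_M$ (Proposition \ref{pro:ContI}, which applies under the $C^{2,\alpha}$-bounded geometry assumption) yields $\liminf_k \P(\Omega_k^d) \geq I_M(v_d)$.

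Next I show that the case $0 < v_c < v$ is impossible. Indeed, combining the two bounds above,
\[
I_M(v) = \lim_{k\to\infty}\bigl[\P(\Omega_k^c)+\P(\Omega_k^d)\bigr] \geq I_M(v_c) + I_M(v_d).
\]
But Corollary \ref{cor:RicPos} asserts that $I_M$ is strictly concave on $[0,V(M)[$ with $I_M(0)=0$, hence strictly subadditive, so $I_M(v) < I_M(v_c) + I_M(v_d)$ whenever $0 < v_c, v_d < v$. This contradiction forces either $v_c = v$ (hence $v_d = 0$) or $v_c = 0$ (hence $v_d = v$).

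Finally, I produce the desired decomposition according to which alternative occurs. If $v_c = v$, I set $\Omega_k^1 := \Omega_k^c$ and $\Omega_k^2 := \Omega_k^d$; then $V(\Omega_k^1) \to v$, $V(\Omega_k^2) \to 0$, $\Omega_k^1$ converges in the finite perimeter sense to $\Omega$, and $\P(\Omega) \leq \liminf_k \P(\Omega_k^c) \leq I_M(v)$ forces $\P(\Omega) = I_M(v)$, so $\Omega$ is an isoperimetric region of volume $v$. If instead $v_c = 0$, then $V(\Omega) = 0$ implies $\Omega = \emptyset$ (modulo a null set) and $\P(\Omega) = 0$; I then swap roles and set $\Omega_k^1 := \Omega_k^d$, $\Omega_k^2 := \Omega_k^c$, so that $V(\Omega_k^1) \to v$, $V(\Omega_k^2) \to 0$ and $\Omega_k^1$ diverges. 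In either case the required conclusion holds. The only non-routine ingredient is the strict subadditivity of $I_M$, which is precisely where the hypothesis $\Ric_g\geq 0$ enters through Corollary \ref{cor:RicPos}; once this is available, the rest of the argument is a clean bookkeeping of volumes and perimeters.
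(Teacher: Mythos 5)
Your proposal is correct and follows essentially the same route as the paper: apply the Ritor\'e--Rosales decomposition, use $\P(\Omega^d_k)\geq I_M(V(\Omega^d_k))$ together with the continuity of $I_M$ from Proposition \ref{pro:ContI} to get $I_M(v)\geq I_M(v_c)+I_M(v_d)$, and contradict the strict subadditivity of Corollary \ref{cor:RicPos} unless one of the limiting volumes vanishes. Your extra bookkeeping at the end (choosing $\Omega^1_k,\Omega^2_k$ according to which alternative occurs) is a detail the paper leaves implicit, and it is handled correctly.
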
 

\begin{proof}
Applying Theorem \ref{thm:RR} to the minimizing sequence $\{\Omega_k\}_{k \in \N}$ we obtain the sequences of sets of finite perimeter $\{\Omega^c_k\}_{k \in \N}$, $\{\Omega^d_k\}_{k \in \N}$ with the stated properties. Let 
$$v_1=V(\Omega)=\lim_{j \to \infty} V(\Omega^c_{k_j}) \quad \text{and} \quad v_{\infty}=v-v_1=\lim_{j \to \infty} V(\Omega^d_{k_j}).$$
The conclusion follows if we prove that either $v_1=v$ and $v_\infty=0$ or that $v_1=0$ and $v_\infty=v$. 

Assume by contradiction that  both $v_1$ and $v_\infty$ are strictly positive. Combining items 2, 4 and 5 of Theorem \ref{thm:RR}, we infer
\be \nonumber
I_M(v)=\lim_{k\to \infty} [\P(\Omega^c_k)+\P(\Omega^d_k)]=I_M(v_1)+ \lim_{k\to \infty} \P(\Omega^d_k).
\ee
Using the trivial inequality $\P(\Omega^d_k)\geq I_M(V(\Omega^d_k))$ we can continue the chain above, obtaining
\be\nonumber 
I_M(v)\geq I_M(v_1)+\limsup_{k\to \infty} I_M(V(\Omega^d_k)) \geq I_M(v_1)+\lim_{j\to \infty} I_M(V(\Omega^d_{k_j}))=I_M(v_1)+I_M(v_\infty),
\ee
where, in the last equality, we used that $\lim_{j\to \infty} V(\Omega^d_{k_j})=v_\infty$ together with the continuity of the isoperimetric profile ensured by Proposition \ref{pro:ContI}. Therefore $I_M(v)\geq I_M(v_1)+I_M(v_\infty)$, and if both $v_1$ and $v_\infty$ are strictly positive, this contradicts the strict subadditivity of $I_M$ stated in Corollary \ref{cor:RicPos}.
\end{proof}

\subsection{Existence of isoperimetric regions of small volume under assumptions on the scalar curvature}

In this section we prove  Theorem \ref{thm:ExSmallVol}, the existence of isoperimetric regions of small volumes in non-compact manifolds of any dimension under assumptions on the scalar curvature alone.
\newline

PROOF OF THEOREM \ref{thm:ExSmallVol}: From Lemma 3.6 in \cite{NardNC}, there exists a small $v_0>0$ such that for any $0<v<v_0$, the isoperimetric profile $I_M(v)$ is achieved in the enlarged manifold $M\cup M_\infty$, where $M_\infty$ is given by a compactness argument in the theory of pointed convergence of manifolds (see \cite{NardNC}, and note that we have changed the notation a bit from that in the cited paper, where $M_\infty$ may coincide with $M$, while here $M$ denotes the original manifold and $M_\infty$ denotes the manifold we are attaching at infinity in case a minimizing sequence is diverging). From Lemma 3.7, the minimizer is a pseudo-bubble (for the precise notion see Definition \ref{def:pseudBub}) $\Psi B_v$ contained either in $M$ or in $M_\infty$.\\  

We now show that $\Psi B_v$ must be contained in $M$, from which the theorem follows. 
Suppose for contradiction that $\Psi B_v\subset M_\infty$. Then the expansion of the isoperimetric profile $I_{M_\infty}$ for small volume (see formula (2) in Theorem 2 in  \cite{NardNC}) is
\begin{equation}\label{eq:IMIMinfty}
I_M(v)=I_{M_\infty}(v)=c_n v^{\frac{n-1}{n}}\left(1-\frac{S_\infty}{2n(n+2)}\left(\frac{v}{\omega_n}\right)^\frac{2}{n}  + o\left( v^{\frac{2}{n}}\right)   \right), 
\end{equation}
where $c_n$ is the Euclidean isoperimetric constant, $S_\infty:=\sup_{M_\infty}\Scal_{g_\infty}$, and $\omega_n$ is the volume of the $n$ dimensional ball of radius $1$. Notice that since $(M,g)$ has $C^{2,\alpha}$-bounded geometry, the asymptotic bounds on the curvature of $M$ are transferred to the $C^{2,\alpha}$-limit manifold $M_\infty$, so under our assumptions we have that
\be\nonumber
S_\infty \leq S.
\ee

On the other hand, taking a point $\bar{p}\in M$ where $\Scal_g(\bar{p})>S$, the same computations show that on small geodesic balls $B_{\bar{p},v}$ of volume $v$ centered at $\bar{p}$ we have
\begin{equation}\label{eq:expAM}
\P_M(B_{\bar{p},v})=c_n v^{\frac{n-1}{n}}\left(1-\frac{\Scal_g(\bar{p})}{2n(n+2)}\left(\frac{v}{\omega_n}\right)^\frac{2}{n}  + o\left( v^{\frac{2}{n}}\right)   \right).
\end{equation}
Since $\Scal_g(\bar{p})>S\geq S_\infty$, the combination of \eqref{eq:IMIMinfty} and \eqref{eq:expAM} gives the contradiction 
$$I_M(v)\leq A_M(S_{\bar{p},v})<I_{M_\infty}(v)=I_M(v), \quad \text{for small }v>0.$$
Finally, from Theorem 1 in \cite{NardNC}, the isoperimetric regions of small fixed volume $v$ are pseudo-bubbles with center of mass $\bar{p}_v$ converging in Hausdorff distance to the set of points of global maximum of the scalar curvature $\Scal_g$ as $v \to 0$.

\section{Noteworthy examples of manifolds satisfying the assumptions of Theorem \ref{thm:ExMinMain} and open problems} \label{Sec:Examples}
\subsection{ALE Gravitational Instantons}

A first class of manifolds satisfying the assumptions of Theorem \ref{thm:ExMinMain} is given by the so-called Asymptotically Locally Euclidean (ALE) Gravitational Instantons: 4-manifolds which are solutions of the Einstein vacuum equations with null cosmological constant (i.e. they are Ricci flat, $\Ric \equiv 0$).  These are non-compact with just one end which is topologically a quotient of $\R^4$ by a finite subgroup of $O(4)$, and the Riemannian metric $g$ on this end is asymptotic to the euclidean metric up to $O(r^{-4})$,
$$g_{ij}=\delta_{ij}+O(r^{-4}),$$
with appropriate decay in the derivatives of $g_{ij}$ (in particular, these metrics are  $C^0$ locally asymptotic, in the sense of Definition \ref{def:CmaConv}, to the Euclidean $4$-dimensional space). 
\\

The first example of such manifolds was discovered by Eguchi and Hanson in  \cite{EH78}; the authors, inspired by the discovery of self-dual instantons in Yang-Mills Theory, found a self-dual ALE instanton metric.  The Eguchi-Hanson example was then generalized by Gibbons and Hawking \cite{GH} who constructed for each integer $k\geq 2$ a family of ALE 4-dimensional gravitational instantons depending on $3k-6$ parameters, which have self dual curvature and are asymptotic to a quotient of  $\R^4$ by a cyclic group of order $k$; these ''multi-Eguchi-Hanson'' metrics constitute the  building blocks of Euclidean quantum gravity theory (see \cite{H1}, \cite{H2}) and were obtained also by Hitchin \cite{Hi1}, who derived them through an application of Penrose's non-linear graviton construction. The ALE Gravitational Instantons were classified in 1989 by Kronheimer (see \cite{Kro1}, \cite{Kro2}).
\\

For the reader's convenience, in order to give at least one explicit example, we briefly describe the Eguchi-Hanson metric following \cite{EH79}.  Let $ds^2=dt^2+dx^2+dy^2+dz^2$ the Euclidean metric in $\R^4$ and observe that the flat metric can be written in polar coordinates as
\begin{equation}\label{eq:EucPolar}
ds^2=dr^2+r^2(\sigma_x^2+\sigma_y^2+\sigma_z^2)
\end{equation}
where $r^2=t^2+x^2+y^2+z^2$ and 
\begin{eqnarray}
\sigma_x&=&\frac{1}{r^2}(x dt-t dx+y dz-z dy) \nonumber \\
\sigma_y&=&\frac{1}{r^2}(y dt-t dy+z dx-x dz) \nonumber \\
\sigma_z&=&\frac{1}{r^2}(z dt-t dz+x dy-y dx). \nonumber 
\end{eqnarray}
Then the Eguchi-Hanson metric can be written
\begin{equation}\label{eq:EgHan}
ds^2=\left[1-\left(\frac{a}{r} \right)^4 \right]^{-1} dr^2+r^2(\sigma_x^2+\sigma_y^2)+r^2 \left[1-\left(\frac{a}{r} \right)^4 \right]  \sigma_z^2,
\end{equation}
where $a$ is a real constant. The metric is singular at $r=a$ in $\R^4$, but this singularity disappears if one identifies $(t,x,y,z)\sim (-t,-x,-y,-z)$, after which we obtain a smooth, geodesically complete, Ricci flat metric on $\R^4/ \sim$. The global topology of the manifold is the following: near $r=a$ the manifold has the topology of $\R^2\times S^2$ (more precisely, at every point of $S^2$ there is an $\R^2$ attached which shrinks to a point as $r\to a$) while for large $r$ the metric approaches the flat metric.  Notice that because of the identification $\sim$, the boundary at infinity is not $S^3$ but $\R P^3\cong SO(3)$, for which $S^2\cong SU(2)$ is the double cover. So, as remarked before, the manifold is locally asymptotically Euclidean, but the global topology at infinity differs from the that of $\R^4$. For completeness let us also recall that the entire manifold just described can be seen also as the cotangent bundle of the complex plane $\C P^1\cong S^2$.

{\bf{Open Problem 1}}: By the direct application of Theorem \ref{thm:ExMinMain} to the Eguchi-Hanson space, we get existence of isoperimetric regions for \emph{every} value of the area.  It is an interesting open problem to characterize such regions. Since the metric is radially symmetric, we expect that (at least for large volumes) the isoperimetric regions are the 3-dimensional projective spaces described by $\{r= const \}$.  

{\bf{Open Problem 2}}: Clearly  Theorem \ref{thm:ExMinMain} can be applied as well to the other more general ALE Gravitational Instantons mentioned above; the description of the isoperimetric regions is again an interesting open problem. We expect that for large volumes they are normal graphs of the quotient of large centered spheres.
\\

We remark that the existence and description of isoperimetric regions is an important issue in general relativity. To name a few examples, D. Christodoulou and S.T. Yau  proved in \cite{ChrYau} that the Hawking mass of isoperimetric spheres is non-negative (provided the scalar curvature of the ambient manifold is non-negative);  H. Bray in  \cite{BrayTh} gave a proof of a special case of the Riemannian Penrose inequality using isoperimetric techniques;  G. Huisken in \cite{HuRMFO} proposed a definition of mass using just isoperimetric concepts; H. Bray and F. Morgan  in \cite{BrMo} characterized isoperimetric regions in certain spherically symmetric  manifolds, in particular in Schwarzshild; M. Eichmair and J. Metzger in \cite{MeEi1}, \cite{MeEi2} and \cite{MeEi3} described the isoperimetric regions of large volume in initial data sets for the Einstein's equations;  J. Corvino, A. Gerek, M. Greenberg, B. Krummel in \cite{CGGK} characterized the isoperimetric regions in the spatial Reissner-Nordstrom and Schwarzschild anti-de Sitter manifolds; S. Brendle and M. Eichmair in \cite{BrEi} characterized isoperimetric regions in the ''doubled'' Schwarzschild manifold.

\subsection{Asymptotically Hyperbolic Einstein manifolds}
In this subsection we discuss the importance and existence of Einstein manifolds which are locally $C^0$-asymptotic to  a negatively curved space form (and hence satisfy the assumption of Theorem \ref{thm:ExMinMain}). 

Let $M$ be the interior of a compact $n$-dimensional manifold $\bar{M}$ with non-empty boundary $\partial M$; a complete metric $g$ on $M$ is $C^{m,\alpha}$ conformally compact if there is a defining function $\bar{\rho}$ on $\bar{M}$ such that the conformally equivalent metric $$\tilde{g}=\rho^2 g$$
extends to a $C^{m,\alpha}$ metric on the compactification $\bar{M}$. A defining function $\rho$ is a smooth, non-negative function on $\bar{M}$ with $\rho^{-1}(0)=\partial M$ and $d \rho\neq 0$ on $\partial M$. The induced metric $\gamma=\tilde{g}_{|\partial M}$ is called the  boundary metric associated to the compactification $\tilde{g}$. There are many possible defining functions and hence many compactifications of a metric $g$, so only the conformal class $[\gamma]$ of $\gamma$ on $\partial M$ is uniquely determined by $(M,g)$. If the metric $g$ is $C^2$ conformally compact and Einstein normalized so that
$$ \Ric_g= -(n-1) g,$$
then it is asymptotically hyperbolic in the sense that $|K_g+1|=O(\rho^2)$, where $K_g$ is the sectional curvature of $g$ (see for example the Appendix in \cite{AndGAFA}). The relationship with the hyperbolic space can be made even more explicit by constructing special coordinate charts near the boundary (see for example Chapter 3 in \cite{Lee}).
\\

In recent years, interest in asymptotically hyperbolic Einstein metrics has risen dramatically, also thanks to their physical relevance. Indeed the previous described notion of conformal infinity for a (pseudo)-Riemannian manifold was introduced by  Penrose \cite{Penrose} in order to analyze the behaviour of gravitational energy in asymptotically flat space times. More recently, asymptotically hyperbolic Einstein metrics have begun to play a central role in the ''AdS/CFT correspondence'' of quantum field theory: broadly speaking the correspondence states the existence of a duality equivalence between gravitational theories (such as string theory or $M$ theory) on $M$ and conformal field theories on the boundary at conformal infinity $\partial M$ (see for example \cite{And.Corr}).  
\\

Regarding the existence of such metrics, Graham and Lee \cite{GL} have proved that any metric $\gamma$ near the standard metric $\gamma_0$ on $S^{n-1}$ in a sufficiently smooth topology may be filled with an asymptotically hyperbolic Einstein metric  $g$ on the $n$-ball $B^n$ having prescribed boundary metric $\gamma$, and moreover such metrics have a conformal compactification with a certain degree of smoothness. More precisely, they prove that for any $m\geq 2$, there is an open neighborhood $U_{\gamma_0}$ of $\gamma_0$ in the space of $C^{m,\alpha}$ metrics on $S^{n-1}$ such that any metric $\gamma \in U_{\gamma_0}$ is the boundary metric of an asymptotically hyperbolic Einstein metric $g$ on the $n$-ball $B^n$, i.e. $\gamma=\tilde{g}_{|\partial M}$. Furthermore, the metric $g$ is $C^{n-2, \alpha}$-conformally compact for $n>4$ and $C^{1,\alpha}$ for $n=4$. Biquard \cite{Biq} and Lee \cite{Lee} independently extended this result to boundary metrics in an open $C^{m,\alpha}$-neighborhood of the boundary metric $\gamma_0$ of an arbitrary non-degenerate asymptotically hyperbolic Einstein manifold $(M,g)$. Anderson \cite{AndGAFA} gave other existence results using degree arguments, under the assumption that the boundary metric $\gamma$ has positive scalar curvature.

\subsection{Bryant soliton and its generalizations}
Another class of Riemannian manifolds satisfying the assumptions of Theorem \ref{thm:ExMinMain} is given by Ricci solitons of Bryant type. These metrics have non-negative Ricci curvature and are locally $C^0$-asymptotically Euclidean.  R. Bryant in \cite{Bryant} proved that it is possible to find a function $\phi:\R^+\to \R$ such that the warped product metric
$$g=dr^2+\phi(r)^2 g_{S^{n-1}}$$
on $\R^n$, where $g_{S^{n-1}}$ is the standard metric on $S^{n-1}$ and $r=\sqrt{(x^1)^2+\ldots+(x^n)^2}$ is the radial coordinate, is a complete metric with  positive curvature operator (hence positive Ricci curvature), whose sectional curvatures decay at least inverse linearly in $r$ (Bryant's proof is in dimension three, but analogous arguments give the general case; see for example Section 4.6 in \cite{CLN}). This metric plays a crucial role in the analysis of Ricci flow, being the only example in dimension three of a  non-flat and $k$-non-collapsed steady gradient Ricci soliton (see \cite{Bre3D} and \cite{BreHD} for higher dimension).  

Other soliton examples fitting our assumptions are given by Catino-Mazzieri in \cite{CM12}.

\end{document}